\let\chapter\@undefined\makeatother 
\newcommand{\E}{\mathbb{E}}
\newcommand{\cP}{\mathcal{P}}
\newcommand{\M}{\mathbb{M}}
\newcommand{\NN}{\mathbb{N} \mathbb{N}}
\newcommand{\LSTM}{\mathbb{L} \mathbb{S} \mathbb{T} \mathbb{M}}
\newcommand{\N}{\mathbb{N}}
\newcommand{\Z}{\mathbb{Z}}
\newcommand{\R}{\mathbb{R}}
\renewcommand{\P}{\mathbb{P}}
\newcommand{\e}{\epsilon}
\newcommand{\F}{\mathcal{F}}
\newcommand{\A}{\mathbb{A}}
\newcommand{\x}{\mathbf{x}}
\newcommand{\X}{\mathbf{X}}
\newcommand{\1}{\mathbbm{1}}
\newcommand{\W}{\Omega}
\renewcommand{\a}{\boldsymbol{\alpha}}
\renewcommand{\L}{\mathcal{L}}
\theoremstyle{plain}
\newtheorem{Theorem}{Theorem}[section]
\newtheorem{Lemma}[Theorem]{Lemma}
\theoremstyle{definition}
\theoremstyle{remark}
\newtheorem{Remark}[Theorem]{Remark}
\numberwithin{equation}{section}
\numberwithin{figure}{section}
\begin{document}

\nobibliography*                                
\setlist{noitemsep}                             


\title{Deep Learning Methods for Mean Field Control Problems with Delay}

\author{
Jean-Pierre Fouque
    \thanks{Department of Statistics \& Applied Probability, University of California, Santa Barbara, CA \ 93106-3110, e-mail: fouque@pstat.ucsb.edu. Work  supported by NSF grant DMS-1814091.}
\and
Zhaoyu Zhang 
    \thanks{Department of Statistics \& Applied Probability, University of California, Santa Barbara, CA \ 93106-3110, e-mail: zhaoyu\_zhang@ucsb.edu}
}

\date{\today}


\maketitle

\begin{abstract}
We consider a general class of mean field control problems described by stochastic delayed differential equations of McKean-Vlasov type.  Two numerical algorithms are provided  based on deep learning techniques, one is to directly parameterize the optimal control using neural networks, the other is based on numerically solving the McKean-Vlasov forward anticipated backward stochastic differential equation (MV-FABSDE) system. In addition, we establish a necessary and sufficient stochastic  maximum principle for this class of mean field control problems with delay based on the differential calculus on function of measures, as well as existence and uniqueness results  for the associated MV-FABSDE system.

\end{abstract}

{\bf Keywords:} {deep learning, mean field control, delay}

{\bf Mathematical Subject Classification (2000):} {93E20, 60G99, 68-04}


\onehalfspacing



\section{Introduction}\label{sec: intro}
Stochastic games were introduced to study the optimal behaviors of agents interacting with each other. They are used to study the topic of systemic risk in the context of finance. For example, in \citep{Carmona_Fouque_Sun:2015}, the authors proposed a linear quadratic inter-bank borrowing and lending model, and solved explicitly for the Nash equilibrium with a finite number of players. Later, this model was extended in \citep{Carmona_Fouque_Mousavi_Sun:2018} by considering  delay in the control in the state dynamic to account for the debt repayment. The authors analyzed the problem via a  probabilistic approach which relies on stochastic maximum principle, as well as via an analytic approach which is built on top of an infinite dimensional dynamic programming principle. 

Both mean field control and mean field games are used to characterize the asymptotic  behavior of a stochastic game as the number of players grows to infinity under the assumption that all the agents behave similarly, but with different notion of equilibrium. Mean field games consist of solving a  standard control problem, where the flow of measures is fixed,  and solving a fixed point problem such that this flow of measures matches the distribution of the dynamic of a representative agent. Whereas, a mean field control problem is  a nonstandard control problem in the sense that the law of state is present in the McKean-Vlasov dynamic, and optimization is performed while imposing the constraint of distribution of the state. More details can be found in \citep{Carmona_Delarue_vol:2018} and \citep{Bensoussan_book:2013}. 

In this paper, we consider a general class of mean field control problems with delay effect in the McKean-Vlasov dynamic. We derive the adjoint process associated with the delayed McKean-Vlasov  stochastic  differential equation, which is an anticipated backward stochastic differential equation of McKean-Vlasov type due to the fact that the conditional expectation of the future of adjoint process as well as the distribution of the state dynamic are involved. This type of anticipated backward stochastic differential equations (BSDE) was introduced in \citep{Peng_Yang:2009}, and for the general theory of BSDE, we refer to \citep{Zhang_book:2017}.  The necessary and sufficient part of stochastic maximum principle for control problem with delay in state and control can be found in \citep{Chen_Wu:2010}. Here, we also establish a necessary and sufficient stochastic maximum principle based on differential calculus on functions of measures as we consider the delay in the distribution.  In the meantime, we also prove the existence and uniqueness of the system of McKean-Vlasov  forward anticipated backward stochastic differential equations (MV-FABSDE) under some suitable conditions using the method of continuation, for which we also refer to \citep{Zhang_book:2017}, \citep{Peng_Wu:1999}, \citep{Bensoussan_Yam_Zhang:2015} and \citep{Carmona_Fouque_Mousavi_Sun:2018}. For a comprehensive study of FBSDE theory, we refer to \citep{Ma_Yong:2007}.

When there was no delay effect in the dynamic,  the relation between the solution to the FBSDE and quasi-linear partial differential equation (PDE) via "Four Step Scheme" is proved in \citep{Ma_Protter_Yong:1994}. The use of deep learning for solving these PDEs in high dimensions is explored in \citep{E_Han_Jentzen:2017} and \citep{Raissi:2018}. The class of fully coupled MV-FABSDE considered in our paper has no explicit solution, and we present an algorithm to tackle the above problem by means of deep learning techniques. Due to the non-Markovian nature of the state dynamic, we apply the long short-term memory (LSTM) network, which is able to capture the arbitrary long-term dependencies in the data sequence. It also partially solves the vanishing gradient problem in vanilla recurrent neural networks (RNNs), as was shown in \citep{Hochreiter_Schmidhuber:1997}. The idea of our algorithm is to approximate the solution to the adjoint process and the conditional expectation of the adjoint process. The optimal control is readily obtained after the MV-FABSDE being solved. We also emphasize that our numerical method for computing conditional expectations may have a wide range of applications, and it is simple to implement. We also present another algorithm solving the mean field control problem by directly parameterizing the optimal control. Similar idea can be found in  the policy gradient method in the regime of reinforcement learning \citep{Sutton:1999} as well as in \citep{E_Han:2016}. Numerically, the two algorithms that we propose in this paper yield the same results. Besides, our approaches are benchmarked to the case with no delay for which we have explicit solutions..

The paper is organized as follows. We start with an $N$-player game with delay, and let the number of players goes to infinity to introduce a mean field control problem in Section \ref{formulation}. Next, in Section \ref{learning}, we mathematically formulate the feedforward neural networks and LSTM networks, and we propose two algorithms to numerically solve the mean field control problem with delay using deep learning techniques. This is illustrated on a simple linear-quadratic toy model, however with delay in  the control. One algorithm is based on directly parameterizing the control, and the other depends on numerically solving the MV-FABSDE system. In addition, we also provide an example of  solving a linear quadratic mean field control problem with no delay both analytically, and numerically.  The adjoint process associated with the delayed dynamic is derived, as well as the stochastic maximum principle is proved in Section \ref{smp}. Finally, the uniqueness and existence solution for this class of MV-FABSDE are proved under suitable assumptions via continuation method in Section \ref{uniqueness}.

\section{Formulation of the Problem}\label{formulation}
 We consider an $N$-player game with delay in both state and control. The dynamic $(X_t^i)_{0 \leq t \leq T}$ for player $i \in \{1, \dots, N\}$ is given by a stochastic delayed differential equation (SDDE),
\begin{equation}\label{n-game}
\begin{aligned}
dX_t^i & = b^i(t, \X_t, \X_{t-\tau}, \alpha^i_t, \alpha^i_{t-\tau}) dt + \sigma^i(t, \X_t, \X_{t-\tau}, \alpha^i_t, \alpha^i_{t-\tau}) dW_t^i, \quad t \in (0, T],\\
X_0^i & = x^i_0, \\ 
X_t^i & = \alpha_t^i  = 0; \quad t \in [-\tau, 0),\\
\end{aligned}
\end{equation}
for $T > 0, \ \tau > 0$ given constants, where $\X_t = (X_t^1, \cdots, X_t^N)$, and where
$((W_t^i)_{t \in [0,T]})_{i = 1, \cdots, N}$ are $N$ independent Brownian motions defined on the space $(\W, \F, \P)$, $(\F_t)_{0 \leq t \leq T}$ being the natural filtration of Brownian motions.  
$$(b^i, \sigma^i): [0, T] \times \times \R^N \times \R^N \times  A \times A \to \R \times \R,$$
are  progressively measurable functions with values in $\R$. We denote $A$ a closed convex subset of $\R$, the set of actions that player $i$ can take, and denote $\A$ the set of admissible control processes. For each $i \in \{1, \dots, N\}$, $A$-valued measurable processes $(\alpha^i_t)_{0 \leq t \leq T}$ satisfy an integrability condition such that $\E\left[ \int_{-\tau}^T |\alpha_t^i|^2 dt \right] < + \infty$. 

Given an initial condition $\x_0 = (x_0^1, \cdots, x_0^N) \in \R^N$, each player would like to minimize his objective functional:
\begin{equation}\label{n-obj}
J^i(\a) = \E \left[\int_0^T f^i(t,  \X_t, \X_{t-\tau}, \alpha_t^i) dt + g^i(\X_T) \right],
\end{equation}
for some Borel measurable functions $f^i: [0, T]  \times \R^N \times \R^N \times A \to \R$, and  $g^i: \R^N \to \R$.

In order to study the mean-field limit of $(\X_t)_{t \in [0, T]}$, we assume that the system \eqref{n-game} satisfy a symmetric property, that is to say, for each player $i$, the other players are indistinguishable. Therefore, drift $b^i$ and volatility $\sigma^i$ in \eqref{n-game} take the form of 
$$(b^i, \sigma^i )(t, \X_t, \X_{t-\tau}, \alpha^i_t, \alpha^i_{t-\tau}) = (b^i,\sigma^i)(t, X_t^i, \mu_t^N, X_{t-\tau}^i, \mu_{t-\tau}^N, \alpha_t^i, \alpha_{t-\tau}^i), $$
and the running cost $f^i$ and terminal cost $g^i$ are of the form
$$f^i(t, \X_t, \X_{t-\tau}, \alpha_t^i) = f^i(t, X_t^i, \mu_t^N, X_{t-\tau}^i, \mu_{t-\tau}^N, \alpha_t^i) \mbox{ and }  g^i(\X_T) = g^i(X_T^i, \mu_T^N),$$
where we use the notation $\mu_t^N$ for the empirical distribution of $\X = (X^1, \cdots, X^N)$ at time $t$, which is defined as
$$\mu_t^N = \frac{1}{N} \sum_{j=1}^N \delta_{X_t^j}.$$

Next, we let the number of players $N$ go to $+\infty$ before we perform the optimization. According to symmetry property and the theory of propagation of chaos, the joint distribution of the $N$ dimensional process $(\X_t)_{0 \leq t \leq T} = (X_t^1, \dots, X_t^N)_{0 \leq t \leq T}$ converges to a product distribution, and the distribution of each single marginal process converges to the distribution of $(X_t)_{0 \leq t \leq T}$ of the following Mckean-Vlasov stochastic delayed differential equation (MV-SDDE). For more detail on the argument without delay, we refer to \citep{Carmona_Delarue_vol:2018} and \citep{Carmona_Delarue_Lachapelle:2014}.
\begin{equation}\label{MV_SDDE}
\begin{aligned}
dX_t & = b(t, X_t, \mu_t, X_{t-\tau}, \mu_{t-\tau}, \alpha_t, \alpha_{t-\tau}) dt + \sigma(t, X_t, \mu_t, X_{t-\tau}, \mu_{t-\tau}, \alpha_t, \alpha_{t-\tau}) dW_t, \quad t \in (0, T],\\
X_0 & = x_0, \\ 
X_t & = \alpha_t  = 0, \quad t \in [-\tau, 0).\\
\end{aligned}
\end{equation}
We then optimize after taking the limit. The objective for each player of  \eqref{n-obj} now becomes
\begin{equation}\label{MV_obj}
J(\alpha) = \E \left[ \int_0^T f(X_t, \mu_t, X_{t-\tau}, \mu_{t-\tau}, \alpha_t)dt + g(X_T, \mu_T)\right],
\end{equation}
where we denote $\mu_t := \L(X_t)$ the law of $X_t$.

\section{Solving Mean-Field Control Problems Using Deep Learning Techniques}\label{learning}

Due to the non-Markovian structure, the above mean-field optimal control problem \eqref{MV_SDDE}-\eqref{MV_obj} is difficult to solve either analytically or numerically. Here we propose two algorithms together with four approaches to tackle the above problem based on deep learning techniques. We would like to use two types of neural networks, one is called the feedforward neural network, and the other one is called Long Short-Term Memory (LSTM) network.

For a feedforward neural network, we first define the set of layers $\M_{d, h}^{\rho}$, for $x \in \R^d$,  as 
\begin{equation}
 \M_{d, h}^\rho :=  \{M: \R^d \to \R^h |  M(x) =  \rho(Ax + b), A \in \R^{h\times d}, b \in \R^h\}. 
\end{equation}
$d$ is called input dimension, $h$ is known as the number of hidden neurons, $A \in \R^{h \times d}$ is the weight matrix, $b \in \R^h$ is the bias vector, and $\rho$ is called the activation function. The following activation functions will be used in this paper, for some $x \in \R$,
$$\rho_{ReLU} (x) := x^+ = \max(0, x); \quad \rho_s (x) := \frac{1}{1 + e^{-x}}; \quad \rho_{\tanh} (x) := \tanh(x); \quad \rho_{Id}(x) := x.$$
Then feedforward neural network is defined as as a composition of layers, so that the set of feedforward neural networks with $l$ hidden layers we use in this paper is defined as 
\begin{multline}\label{NN}
\N\N_{d_1, d_2}^l = \{\tilde{M}: \R^{d_1} \to \R^{d_2}| \tilde{M} =  M_{l} \circ \cdots \circ M_1 \circ M_0, \\ 
M_0 \in  \M^{\rho_{ReLU}}_{d_{1}, h_1},  M_{l} \in  \M^{\rho_{Id}}_{h_{l}, d_2}, M_i \in \M^{\rho_{ReLU}}_{h_{i}, h_{i+1}}, h_{\cdot} \in \Z^+, i = 1, \dots, l-1\}.
\end{multline}

The LSTM network is one of  recurrent neural networks(RNN) architectures, which are powerful for capturing long-range dependence of the data. It is proposed in \citep{Hochreiter_Schmidhuber:1997}, and it is designed to solve the shrinking gradient effects which basic RNN often suffers from. The LSTM network is a chain of cells. Each LSTM cell  is composed  of a cell state, which contains information, and three gates, which regulate the flow of information. Mathematically, the rule inside $t$th cell follows,
\begin{equation}\label{gates}
\begin{aligned}
\Gamma_{f_t} = & \rho_{s} (A_f x_t + U_f a_{t-1} + b_f), \\
\Gamma_{i_t} = & \rho_{s}(A_i x_t + U_i a_{t-1} + b_i), \\
\Gamma_{o_t} = & \rho_s (A_o x_t + U_o a_{t-1} + b_o), \\
c_t = & \Gamma_{f_t} \odot c_{t-1} + \Gamma_{i_t} \odot  \rho_{\tanh} (A_c x_t + U_c a_{t-1} + b_c), \\
a_t = & \Gamma_{o_t} \odot  \rho_{\tanh}(c_t), \\
\end{aligned}
\end{equation}
where the operator $\odot $ denotes the  Hadamard product. 
$(\Gamma_{f_t}, \Gamma_{i_t}, \Gamma_{o_t}) \in \R^h \times \R^h \times \R^h$ represents forget gate, input gate and output gate respectively, $h$ refers the number of hidden neurons. $x_t \in \R^d$ is the input vector with $d$ features. $a_t \in \R^h$ is known as the output vector with initial value $a_0 = 0$, and $c_t \in \R^h$ is known as the cell state with initial value $c_0 = 0$. $A_\cdot \in \R^{h \times d}$ are the weight matrices connecting input and hidden layers,  $U_\cdot \in \R^{h \times h}$ are the weight matrix connecting hidden and output layers, and $b \in \R^h$  represents bias vector. The weight matrices and bias vectors are shared through all time steps, and are going to be learned during training process by back-propagation through time (BPTT), which can be implemented in Tensorflow platform. 
Here we define the set of LSTM network up to time $t$ as
\begin{multline}\label{LSTM}
\LSTM_{d, h, t} = \bigg\{M: (\R^d)^t \times \R^{h} \times \R^h \to \R^h \times \R^h \  | \ M(x_{0}, \dots, x_t, a_{0}, c_{0}) = (a_t, c_t), \\
c_t = \Gamma_{f_t} \odot c_{t-1} + \Gamma_{i_t} \odot  \rho_{\tanh} (A_c x_t + U_c a_{t-1} + b_c), a_t = \Gamma_{o_t} \odot  \rho_{\tanh}(c_t), a_0 = c_0 = 0  \bigg\},
\end{multline}
where $\Gamma_{f_\cdot}, \Gamma_{i_\cdot}, \Gamma_{o_\cdot}$ are defined in \eqref{gates}.

In particular, we specify the model in a linear-quadratic form, which is inspired by \citep{Carmona_Fouque_Mousavi_Sun:2018} and \citep{Fouque_Zhang:2018}. The objective function is defined as
\begin{equation}\label{SDDE_obj}
J(\alpha) = \E \left[\int_0^T \left(\frac{1}{2}\alpha_t^2 + \frac{c_f}{2} (X_t - m_t)^2 \right) dt + \frac{c_t}{2} (X_T - m_T)^2  \right],
\end{equation}
subject to
\begin{equation}\label{SDDE}
\begin{aligned}
dX_t = & (\alpha_t - \alpha_{t-\tau}) dt + \sigma dW_t, \quad t \in [0,T],\\
X_0 = & x_0,\\
X_t = \alpha_t = & 0, \quad t \in [-\tau, 0),
\end{aligned}
\end{equation}
where $\sigma, c_f, c_t > 0$ are given  constants, and $m_t := \int_\R x d\mu_t(x)$ denotes the mean of $X$ at time $t$, and $\mu_t := \L(X_t)$ .  
In the following subsections, we solve the above problem numerically using two algorithms together with four approaches. The first two approaches are to directly approximate the control by either a LSTM network or a feedforward neural network, and minimize the objective \eqref{SDDE_obj} using stochastic gradient descent algorithm.
The third and fourth approaches are to introduce the adjoint process associated with \eqref{SDDE}, and approximate the adjoint process and the  conditional expectation of adjoint process using neural networks.

\subsection{Approximating the Optimal Control Using Neural Networks}

We first set $\Delta t = T/ N = \tau / D$ for some positive integer $N$. The time discretization becomes $$-\tau = t_{-D} \leq t_{-D+1} \leq  \cdots \leq t_{0} = 0 = t_0 \leq t_1 \leq \cdots \leq t_N \leq T,$$ for $t_i - t_{i - 1} = \Delta t, \  i \in \{-D+1, \cdots, 0,  \cdots, N-1, N\}$. The  discretized  SDDE  \eqref{SDDE} according to Euler-Maruyama scheme now reads
\begin{equation}
X_{t_{i+1}} = X_{t_{i}} + (\alpha_{t_i} - \alpha_{t_{i-D}}) \Delta t + \sigma \sqrt{\Delta t} \Delta W_{t_{i}}, \mbox{ for } i \in \{0, \cdots, N-1\},
\end{equation}
where $(\Delta W_{t_i})_{0 \leq i \leq N-1}$ are independent, normal distributed sequence of random variables with mean 0 and variance 1. 

 First, from the definition of open loop control, and due to non-Markovian feature of \eqref{SDDE}, the open-loop optimal control is a function of the path of the Brownian motions up to time $t$, i.e., $\alpha(t,(W_s)_{0 \leq s \leq t})$. We are able to describe this dependency by a LSTM network by parametrizing the control as a function of current time and the discretized increments of Brownian motion path, i.e., 
\begin{equation}\label{alphaLSTM}
\begin{aligned}
(a_{t_i}, c_{t_i}) = & \varphi^1(t_i, (\Delta W_s)_{t_0 \leq s \leq t_i} | \Phi^1)\\
&  \mbox{ for $\varphi^1 \in \LSTM_{2, h_1, t}$ and $\Phi^1 = (A_f, A_i, A_o, A_c, U_f, U_i, U_o, U_c, b_f, b_i, b_o, b_c)$}, \\
\alpha(t_i, (W_s)_{t_0 \leq s \leq t_i}) \approx & \psi^1(a_{t_i} | \Psi^1 ) \mbox{ for $\psi^1 \in \M_{h_1, 1}^{Id}$ and $\Psi^1 = (A, b)$}, 
\end{aligned}
\end{equation} 
for some $h_1 \in \Z^+$. We remark that the last dense layer is used to match the desired output dimension.

The second approach is again directly approximate the control but with a feedforward neural network. Due to the special structure of our model, where the mean of dynamic in \eqref{SDDE} is constant, the mean field control problem coincides with the mean field game problem.  In \citep{Fouque_Zhang:2018}, authors solved the associated mean field game problem using infinite dimensional PDE approach, and found that the optimal control is a function of current state and the past of control. 
Therefore, the feedforward neural network with $l$ layers, which we use to approximate the optimal control, is defined as
\begin{equation}\label{alphaNN}
\begin{aligned}
\alpha_{t_i}(X_{t_i}, (\alpha_s)_{t_{i-D} \leq s < t_i}) \approx &  \psi^2(X_{t_i}, (\alpha_s)_{t_{i-D} \leq s \leq t_i} | \Psi^2)\\
& \mbox{ for } \psi^2 \in \NN_{D+1, 1}^l, \ \Psi^2 = (A_{0}, b_0, \dots, A_{l}, b_{l}).
\end{aligned}
\end{equation}

From Monte Carlo algorithm, and trapezoidal rule, the objective function \eqref{SDDE_obj} now becomes
\begin{multline}\label{obj_discrete}
J = \frac{1}{M} \sum_{j=1}^M \bigg[ \bigg( \frac{1}{2} (\alpha_{t_0}^{(j)} )^2 + \frac{c_f}{2}(X_{t_0}^{(j)} - \bar{X}_{t_0})^2 + \sum_{i=1}^{N-1} \bigg( ( \alpha_{t_i}^{(j)}  )^2 + c_f(X_{t_i}^{(j)} - \bar{X}_{t_i})^2 \bigg)  \\
+ \frac{1}{2} (\alpha_{t_N}^{(j)} )^2 + \frac{c_f}{2}(X_{t_N}^{(j)} - \bar{X}_{t_N})^2 \bigg) \frac{\Delta t}{2} + \frac{c_t}{2} (X_{t_N}^{(j)} - \bar{X}_{t_N})^2\bigg],
\end{multline}
where $M$ denotes the number of realizations and $\bar{X} := \frac{1}{M} \sum_{j=1}^M X^{(j)}$ denotes the sample mean. After plugging in the neural network either given by \eqref{alphaLSTM}  or \eqref{alphaNN}, the optimization problem becomes to find the best set of parameters either $(\Phi^1, \Psi^1)$ or $\Psi^2$ such that the objective $J(\Phi^1, \Psi^1)$ or $J(\Psi^2)$ is minimized with respect to those parameters. 

The algorithm works as follows:
\\
\begin{algorithm}[H]
\SetAlgoLined
 Initialization of parameters $\Theta_1 = (\Phi^1, \Psi^1)$ for approach 1 \eqref{alphaLSTM} or $\Theta_1 = (\Psi^2)$ for approach 2 \eqref{alphaNN}\;
 
 \For{each epoch $e = 1, 2, \dots$}{
  	$\bullet$ Generate $\Delta W \in \R^{M \times N}$ for $\Delta W_{t_i}^{(j)} := W_{ji} \sim N(0,1)$, $j \in \{1, \dots, M\}$ and $i \in \{1, \dots, N\}$ \;
  	$\bullet$ $ \alpha_{t_i}^{(j)} = 0 $ for $i = \{-D, \dots, -1\}$, $\forall j$\;
  	$\bullet$  $X_0^{(j)} = \bar{X}_0 = x_0$,  $\alpha_0^{(j)} \approx \varphi_0^{(j)}(\Theta_e), \forall j, $ for some network $\varphi$ given by \eqref{alphaLSTM} or by \eqref{alphaNN} at $t_0$ with proper inputs\;
  	$\bullet$  $J = \frac{1}{M}  \sum_{j=1}^M \frac{1}{2} (\varphi_0^{(j)})^2 \frac{\Delta t}{2}$ \;
  \For{($i = 0, \dots, N-1$)}{
   $\bullet$  $X_{t_{i+1}}^{(j)} = X_{t_{i}}^{(j)} + (\alpha_{t_i}^{(j)} - \alpha_{t_{i-D}}^{(j)}) \Delta t + \sigma \sqrt{\Delta t } \Delta W_{t_{i}}^{(j)}, \forall j$\;
   $\bullet$  $\bar{X}_{t_{i+1}} = \frac{1}{M}\sum_{j=1}^M X_{t_{i+1}}^{(j)}$ \;
   $\bullet$  $\alpha_{t_{i+1}}^{(j)} \approx \varphi_{t_{i+1}}^{(j)}(\Theta_e), \forall j$, is  given by either \eqref{alphaLSTM} or \eqref{alphaNN} at $t_{i+1}$ \;
  \eIf{($i =  N-1$)}{
   $\bullet$  $J = J+ \frac{1}{M}  \sum_{j=1}^M \bigg( \frac{1}{2}(\varphi_{t_{i+1}}^{(j)})^2 + \frac{c_f}{2}(X_{t_{i+1}}^{(j)} - \bar{X}_{t_{i+1}})^2 \bigg) \frac{\Delta t}{2}$\;
   }{
   $\bullet$  $J = J+ \frac{1}{M}  \sum_{j=1}^M \bigg( \frac{1}{2}(\varphi_{t_{i+1}}^{(j)})^2 + \frac{c_f}{2}(X_{t_{i+1}}^{(j)} - \bar{X}_{t_{i+1}})^2 \bigg) \Delta t$\;
  }
   }{
   $\bullet$  $J = J + \frac{1}{M}  \sum_{j=1}^M \frac{c_t}{2} (X_{t_N}^{(j)} - \bar{X}_{t_N})^2, \forall j$ \;
   $\bullet$   Compute the gradient $\nabla J(\Theta_e)$ by backpropagation through time \;
   $\bullet$  Stop if $J(\Theta_e)$ converges, or $|\nabla J(\Theta_e)| < \delta$ for some threshold $\delta$, and return $\Theta_e$\;
      $\bullet$  Otherwise, 
update $\Theta_{e+1} = \Theta_e - \eta \nabla J(\Theta_e),$ according to stochastic gradient descent algorithm, for some learning rate $\eta > 0$ small \;
  }
 }
 \caption{Algorithms for solving mean field control problem with delay by directly approximating the optimal control using neural networks}
\end{algorithm}

In the following graphics, we choose $x_0 = 0, c_f = c_t = 1, \sigma = 1, T = 10, \tau = 4, \Delta t = 0.1, M = 4000$. For approach 1, the neural network $\varphi \in \NN$, which is defined in \eqref{NN}, is composed of 3 hidden layers with $d_1 = 42, h_1 = 64, h_2 = 128, h_3 = 64, d_2 = 1$. For approach 2, the LSTM network $\varphi \in \LSTM$, which is defined in \eqref{LSTM}, consists of 128 hidden neurons.    For a specific representative path, the underlying Brownian motion paths are approximately the same for different approaches. Figure \ref{fig:x12} compares one representative optimal trajectory of the state dynamic and the control, and they coincide. Figure \ref{fig:mean12} plots the sample average of optimal trajectory of the state dynamic and the control,  which are trajectories of approximately 0, and this is the same as the theoretical mean.

\begin{figure}[h!]
\centering
  \includegraphics[scale=0.53]{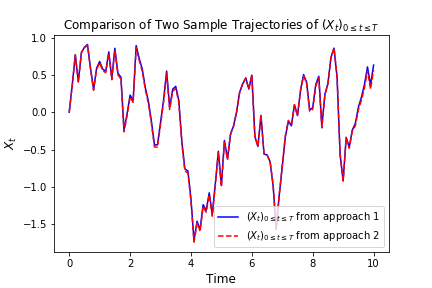}
  \includegraphics[scale=0.53]{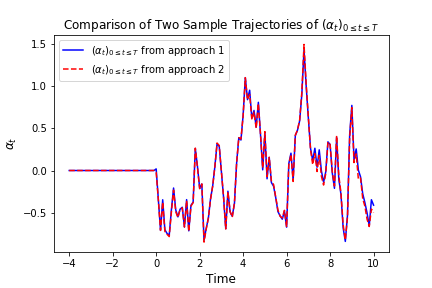}
  \caption{On the left, we compare one representative optimal trajectory of $(X_{t_i})_{t_0 \leq t_i \leq t_N}$. The plot on the right show the comparison of one representative optimal trajectory of $(\alpha_{t_i})_{t_0 \leq t_i \leq t_N}$ between approach 1 and approach 2.}
  \label{fig:x12}
\end{figure}

\begin{center}
\begin{figure}[h!]
  \includegraphics[scale=0.53]{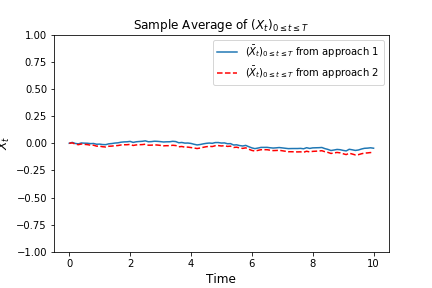}
\includegraphics[scale=0.53]{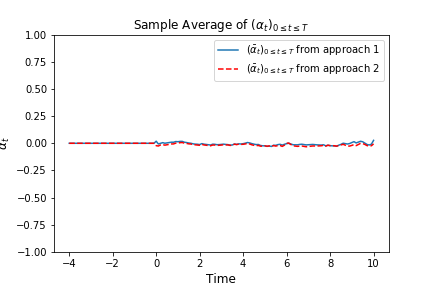}
  \caption{On the left, we compare the sample mean of  optimal trajectories of $(X_{t_i})_{t_0 \leq t_i \leq t_N}$. The plot on the right show the comparison of sample mean of  trajectories of optimal control $(\alpha_{t_i})_{t_0 \leq t_i \leq t_N}$ between approach 1 and approach 2.}
  \label{fig:mean12}
\end{figure}
\end{center}

\subsection{Approximating the Adjoint Process Using Neural Networks}

The third and fourth approaches are based on numerically solving the MV-FABSDE system using LSTM network and feedforward neural networks. From Section \ref{smp}, we derive the adjoint process, and prove the sufficient and necessary parts of stochastic maximum principle. From \eqref{adjoint},  we are able to write the backward stochastic differential equation associated to \eqref{SDDE} as,
\begin{equation}\label{BSDE}
dY_t = - c_f (X_t - m_t) dt + Z_t dW_t, \ t \in [0,T],
\end{equation}
with terminal condition $Y_T = c_t (X_T - m_T)$, and $Y_s = 0$ for $s \in (T, T+\tau]$. The optimal control $(\hat{\alpha}_t)_{0 \leq t \leq T}$ can be obtained in terms of the adjoint process $Y_t$ from the maximum principle, and it is given by 
$$\hat{\alpha}_t = - Y_t + \E[Y_{t+\tau} | \F_t].$$ 
From the Euler-Maruyama scheme, the discretized version of \eqref{SDDE} and \eqref{BSDE} now reads, for $i = \{0, \dots, N-1\}$,
\begin{align}\label{Euler}
X_{t_{i+1}}  = & X_{t_i} + (\hat{\alpha}_{t_i} - \hat{\alpha}_{t_{i-D}}) \Delta t + \sigma \sqrt{\Delta t} \Delta W_{t_{i}}, \mbox{where } \Delta W_{t_{i}} \sim N(0, 1). \\
Y_{t_{i+1}} = & Y_{t_i} - c_f (X_{t_i} - \bar{X}_{t_i})  \Delta t+ Z_{t_{i}}  \sqrt{\Delta t} \Delta W_{t_{i}},
\end{align}
where we use the sample average $\bar{X}_{t} = \frac{1}{M} \sum_{j=1}^M X_{t}^{(j)}$ to approximate the expectation of $X_{t}$. In order to solve the above MV-FABSDE system, we need to approximate $(Y_{t_i}, \E[Y_{t_{i+D}} | \F_{t_i}], Z_{t_i})_{0 \leq t_i \leq t_N}$. 

The third approach consists of approximating  $(Y_{t_i}, \E[Y_{t_{i+D}} | \F_{t_i}], Z_{t_i})_{0 \leq t_i \leq t_N}$ using three LSTM networks as functions of current time and the discretized path of Brownian motions respectively, i.e., 
\begin{equation}\label{YLSTM}
\begin{aligned}
(a_{t_i}^Y, c_{t_i}^Y) = & \varphi^Y(t_i, (\Delta W_s)_{t_0 \leq s \leq t_i} | \Phi^Y)\\
&  \mbox{ for $\varphi^Y \in \LSTM_{2, h_Y, t_i}$ and $\Phi^Y = (A^Y_f, A^Y_i, A^Y_o, A^Y_c, U^Y_f, U^Y_i, U^Y_o, U^Y_c, b^Y_f, b^Y_i, b^Y_o, b^Y_c)$}, \\
Y_{t_i} \approx & \psi^Y(a_{t_i}^Y | \Psi^Y ) \mbox{ for $\psi^Y \in \M_{h_Y, 1}^{Id}$ and $\Psi^Y = (A^Y, b^Y)$}, \\
\mbox{}\\
(a_{t_i}^{EY}, c_{t_i}^{EY}) = & \varphi^{EY}(t_i, (\Delta W_s)_{t_0 \leq s \leq t_i} | \Phi^{EY})\\
&  \mbox{ for $\varphi^{EY} \in \LSTM_{2, h_{EY}, t_i}$}\\
& \mbox{ and $\Phi^{EY} = (A^{EY}_f, A^{EY}_i, A^{EY}_o, A^{EY}_c, U^{EY}_f, U^{EY}_i, U^{EY}_o, U^{EY}_c, b^{EY}_f, b^{EY}_i, b^{EY}_o, b^{EY}_c)$}, \\
E[Y_{t_{i+D}} | \F_{t_i}] \approx & \psi^{EY}(a_{t_i}^{EY} | \Psi^{EY}) \mbox{ for $\psi^{EY} \in \M_{h_{EY}, 1}^{Id}$ and $\Psi^{EY} = (A^{EY}, b^{EY})$}, \\
\mbox{}\\
(a_{t_i}^Z, c_{t_i}^Z) = & \varphi^Z(t_i, (\Delta W_s)_{t_0 \leq s \leq t_i} | \Phi^Z)\\
&  \mbox{ for $\varphi^Z \in \LSTM_{2, h_Z, t_i}$ and $\Phi^Z = (A^Z_f, A^Z_i, A^Z_o, A^Z_c, U^Z_f, U^Z_i, U^Z_o, U^Z_c, b^Z_f, b^Z_i, b^Z_o, b^Z_c)$}, \\
Z_{t_i} \approx & \psi^Z(a_{t_i}^Z | \Psi^Z ) \mbox{ for $\psi^Z \in \M_{h_Z, 1}^{Id}$ and $\Psi^Z = (A^Z, b^Z)$}, \\
\end{aligned}
\end{equation} 
for some $h_Y, h_{EY}, h_Z \in \Z^+$. Again, the last dense layers are used to match the desired output dimension. 

Since approach 3 consists of three neural networks with large number of parameters, which is hard to train in general, we would like to make the following simplification in approach 4 for approximating $(Y_{t_i}, \E[Y_{t_{i+D}} | \F_{t_i}], Z_{t_i})_{t_0 \leq t_i \leq t_N}$ via combination of one LSTM network and three feedforward neural networks. Specifically,
\begin{equation}\label{YNN}
\begin{aligned}
(a_{t_i}, c_{t_i}) = & \varphi(t_i, (\Delta W_s)_{t_0 \leq s \leq t_i} | \Phi)\\
&  \mbox{ for $\varphi \in \LSTM_{2, h, t_i}$ and $\Phi = (A_f, A_i, A_o, A_c, U_f, U_i, U_o, U_c, b_f, b_i, b_o, b_c)$}, \\
Y_{t_i} \approx & \psi^Y(a_{t_i} | \Psi^Y ) \mbox{ for $\psi^Y \in \NN_{h, 1}^l$ and $\Psi^Y = (A^Y_0, b^Y_0, \dots, A_{l}^Y, b_{l}^Y)$}, \\
E[Y_{t_{i+D}} | \F_{t_i}] \approx & \psi^{EY}(a_{t_i} | \Psi^{EY} ) \mbox{ for $\psi^{EY} \in \NN_{h, 1}^l$ and $\Psi^{EY} = (A^{EY}_0, b^{EY}_0, \dots, A_{l}^{EY}, b_{l}^{EY})$}, \\
Z_{t_i} \approx & \psi^Z(a_{t_i} | \Psi^Z ) \mbox{ for $\psi^Z \in \NN_{h, 1}^l$ and $\Psi^Z = (A^Z_0, b^Z_0, \dots, A_{l}^Z, b_{l}^Z)$}. \\
\end{aligned}
\end{equation} 

In words, the algorithm works as follows. We first initialize the parameters $(\Theta^Y, \Theta^{EY}, \Theta^Z) = ((\Phi^Y, \Psi^Y), (\Phi^{EY}, \Psi^{EY}), (\Phi^Z, \Psi^Z))$  either in \eqref{YLSTM} or $(\Theta^Y, \Theta^{EY}, \Theta^Z) = ((\Phi, \Psi^Y),  \Psi^{EY}), \Psi^Z)$ in  \eqref{YNN}. At time 0, $X_0 = x_0$, $(Y_0, \E[Y_{t_D} | \F_0], Z_0) \approx (\varphi_0^Y(\Theta^Y), \varphi_0^{EY}(\Theta^{EY}), \varphi_0^Z(\Theta^Z))$for some network $(\varphi^Y, \varphi^{EY}, \varphi^Z)$ given by either \eqref{YLSTM} or \eqref{YNN}, and $\alpha_0 = -Y_{t_0} + \E[Y_{t_D} | {\cal F}_{t_0}]$. Next, we update $X_{t_{i+1}}$ and $Y_{t_{i+1}}$ according to \eqref{Euler}, and the solution to the backward equation at $t_{i+1}$ is denoted by $\tilde{Y}_{t_{i+1}}$.
In the meantime, $Y_{t_{i+1}}$ is also approximated by a neural network. In such case, we refer to $\tilde{Y}_{\cdot}$ as the label, and $Y_{\cdot}$ given by the neural network as the prediction. We would like to minimize the mean square error between these two. At time $T$,  $Y_{t_N}$ is also supposed to match $c_t(X_{t_N} - \bar{X}_{t_N})$, from the terminal condition of \eqref{BSDE}. In addition, the conditional expectation $\E[Y_{t_{i+D}} | \F_{t_i}]$ given by a neural network should be the best predictor of $\tilde{Y}_{t_{i+D}}$, which implies that we would like to find the set of parameters $\Theta^{EY}$ such that $\E[(\tilde{Y}_{t_{i+D}} - \varphi_{t_i}^{EY}(\Theta^{EY}))^2 ]$ is minimized for all $t_i \in \{t_0, \dots, t_{N-D}\}$. Therefore, for $M$ samples, we would like to minimize two objective functions $L_1$ and $L_2$ defined as
\begin{equation}\label{l1l2}
\begin{aligned}
L_1 (\Theta^{Y}, \Theta^Z)= & \frac{1}{M} \bigg[ \sum_{j=1}^M \sum_{i=0}^N (\varphi^{Y, {(j)}}_{t_i}- \tilde{Y}_{t_{i}}^{(j)})^2 
+ \sum_{j=1}^M \left(\varphi^{Y, {(j)}}_{t_N} - c_t ( X_{t_N}^{(j)} - \bar{X}_{t_N}) \right)^2 \bigg],\\
L_2 (\Theta^{EY}) = & \frac{1}{M} \sum_{j=1}^M \sum_{i=0}^{N-D} 
\left(\varphi^{EY, (j)}_{t_{i}}  - \tilde{Y}_{t_{i + D}}^{(j)} \right)^2.
\end{aligned}
\end{equation}

The algorithm works as follows:\\
\begin{algorithm}[H]
\SetAlgoLined
 Initialization of parameters $(\Theta_1^Y, \Theta_1^{EY}, \Theta_1^Z)$ for approach 3 as in \eqref{YLSTM}
or  approach 4 as in \eqref{YNN}\;
 
 \For{each epoch $e = 1, 2, \dots$}{
  	$\bullet$ Generate $\Delta W \in \R^{M \times N}$ for $\Delta W_{t_i}^{(j)} := \Delta W_{ji} \sim N(0,1)$, $j \in \{1, \dots, M\}$ and $i \in \{1, \dots, N\}$ \;
  	$\bullet$   $\alpha_{t_i}^{(j)} = 0$ for $i \in \{-D, \dots, -1\}, \forall j$ \;
  	$\bullet$  $X_0^{(j)} = \bar{X}_0 = x_0,  \forall j$;  \quad $\left(Y_0^{(j)}, \E[Y_{t_D}^{(j)} | \F_0], Z_0^{(j)} \right) \approx \left(\varphi_0^{Y, (j)}(\Theta_e^Y), \varphi_0^{EY, (j)}(\Theta_e^{EY}), \varphi_0^{Z, (j)}(\Theta_e^Z) \right)$ given by either \eqref{YLSTM} or by \eqref{YNN} ;  \ $\alpha_0^{(j)} \approx -\varphi_0^{Y, (j)} + \varphi_0^{EY, (j)}  $ at $t_0$ \;
  	$\bullet$  $L_1 (\Theta^Y_e, \Theta^Z_e) = 0, \ L_2(\Theta^{EY}_e) = 0$ \;
  \For{($i = 0, \dots, N-1$)}{
   $\bullet$  $X_{t_{i+1}}^{(j)} = X_{t_{i}}^{(j)} + (\alpha_{t_i}^{(j)} - \alpha_{t_{i-D}}^{(j)}) \Delta t + \sigma \sqrt{\Delta t} \Delta W_{t_{i}}^{(j)}, \forall j$\;
   $\bullet$  $\bar{X}_{t_{i+1}} = \frac{1}{M}\sum_{j=1}^M X_{t_{i+1}}^{(j)}$ \;
   $\bullet$ $\tilde{Y}_{t_{i+1}}^{(j)} =  Y_{t_i}^{(j)} - c_f (X_{t_i}^{(j)} - \bar{X}_{t_i})  \Delta t + Z_{t_{i}} \sqrt{\Delta t} \Delta W_{t_{i}}^{(j)}, \forall j$ \;
  \eIf{($i \leq N - D$)}{
     $\bullet$   $\left(Y_{t_{i+1}}^{(j)}, \E[Y_{t_{i+1 + D}}^{(j)} | \F_{t_{i+1}}], Z_{t_{i+1}}^{(j)} \right) \approx \left(\varphi_{t_{i+1}}^{Y, (j)}(\Theta^Y_e), \varphi_{t_{i+1}}^{EY, (j)}(\Theta^{EY}_e), \varphi_{t_{i+1}}^{Z, (j)}(\Theta^Z_e) \right) , \forall j$ given by \eqref{YLSTM} or by \eqref{YNN} at $t_{i+1}$ \;
   	$\bullet$  $L_1 +=\frac{1}{M} \sum_{j=1}^M  \left( \varphi_{t_{i+1}}^{Y, (j)}- \tilde{Y}_{t_{i+1}}^{(j)} \right)^2 
$\;
   }{
   $\bullet$   $\left(Y_{t_{i+1}}^{(j)}, \E[Y_{t_{i+1 + D}}^{(j)} | \F_{t_{i+1}}], Z_{t_{i+1}}^{(j)} \right) \approx \left(\varphi_{t_{i+1}}^{Y, (j)}(\Theta_e^Y), 0, \varphi_{t_{i+1}}^{Z, (j)}(\Theta^Z_e) \right), \forall j$ given by \eqref{YLSTM} or by \eqref{YNN} at $t_{i+1}$ \;
   	$\bullet$  $L_1 +=\frac{1}{M} \sum_{j=1}^M  \left( \varphi_{t_{i+1}}^{Y, (j)} -   \tilde{Y}_{t_{i + 1}}^{(j)} \right)^2 
$\;
  }
   }
     \For{($i = 0, 1, \dots, N-D$)}{

   	$\bullet$  $L_2 +=\frac{1}{M} \sum_{j=1}^M  \left( \varphi_{t_i}^{EY, (j)} - \tilde{Y}_{t_{i+D}}^{(j)} \right)^2 
$\;

   }
   {
   $\bullet$  $L_1 += \frac{1}{M}\sum_{j=1}^M \left(\varphi^{Y, {(j)}}_{t_N}- c_t ( X_{t_N}^{(j)} - \bar{X}_{t_N} ) \right)^2$ \;
   $\bullet$  Compute the gradient $\nabla L_1(\Theta^Y, \Theta^Z)$ and $\nabla L_2(\Theta^{EY})$ by backpropagation through time\;
      $\bullet$  Stop if $L_1(\Theta^Y, \Theta^Z)$ are close to 0, and $L_2(\Theta^{EY})$ converges, return $(\Theta^Y, \Theta^{EY}, \Theta^Z)$\;
         $\bullet$  Otherwise, update $\Theta^Y_{e+1}, \Theta^Z_{e+1}$ and $\Theta^{EY}_{e+1}$ according to SGD algorithm\;
  }
 }
 \caption{Algorithms for solving mean field control problem with delay according to MV-FABSDE}
\end{algorithm}

Again, in the following graphics, we choose  $x_0 = 0, c_f = c_t = 1, \sigma = 1, T = 10, \tau = 4, \Delta t = 0.1, M = 4000$. In approach 3, each of the three LSTM networks approximating $Y_{t_i}, \E[Y_{t_i+D} | \F_{t_i}]$ and $Z_{t_i}$ consists of 128 hidden neurons respectively. In approach 4, the LSTM consists of 128 hidden neurons, and each of the feedforward neural networks has parameters $d_1 = 128, h_1 = 64, h_2 = 128, h_3 = 64, d_2 = 1$. For a specific representative path, the underlying Brownian motion paths are the same for different approaches. Figure \ref{fig:x34} compares one representative optimal trajectory of the state dynamic and the control via two approaches, and they coincide. Figure \ref{fig:mean34} plots the sample average of optimal trajectory of the dynamic and the control,  which are trajectories of 0, which is the same as the theoretical mean.  Comparing to Figure \ref{fig:x12} and Figure \ref{fig:mean12}, as well as based on numerous experiments, we find that given a path of Brownian motion, the two algorithms would yield similar optimal trajectory of state dynamic and similar path for the optimal control.  From Figure \ref{fig:l1l2}, the loss $L_1$ as defined in \eqref{l1l2} becomes approximately 0.02 in 1000 epochs for both approach 3 and approach 4. This can also be observed from Figure \ref{fig:yey}, since the red dash line and the blue solid line coincide for both left and right graphs. In addition, from the righthand side of Figure \ref{fig:l1l2}, we observe the loss $L_2$ as defined in  \eqref{l1l2} converges to 50 after 400 epochs. This is due to the fact that the conditional expectation can be understood as an orthogonal projection. Figure \ref{fig:z} plots 64 sample paths of the process $(Z_{t_i})_{t_0 \leq t_i \leq t_N}$, which seems to be a deterministic function since $\sigma$ is constant in this example. Finally, Figure \ref{fig:value} shows the convergence of the value function as number of epochs increases. Both algorithms arrive approximately at the same optimal value which is around 6 after 400 epochs. This confirms that the out control problem has a unique solution. In section \ref{uniqueness}, we show that the MV-FASBDE system is uniquely solvable.  It is also observable that the first algorithm converges faster than the second one, since it directly paramerizes the control using one neural network, instead of solving the MV-FABSDE system, which uses three neural networks.

\begin{center}
\begin{figure}[h!]
  \includegraphics[scale=0.53]{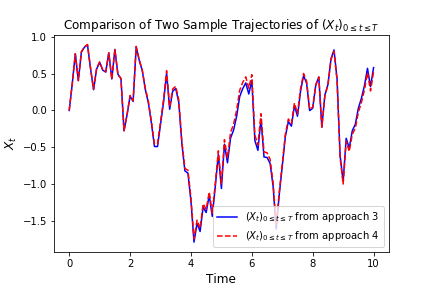}
\includegraphics[scale=0.53]{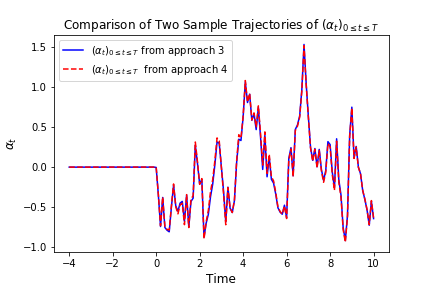}
  \caption{On the left, we compare one representative optimal trajectory of $(X_{t_i})_{t_0 \leq t_i \leq t_N}$. The plot on the right shows the comparison of one representative optimal trajectory of $(\alpha_{t_i})_{t_0 \leq t_i \leq t_N}$ between approach 3 and approach 4.}
  \label{fig:x34}
\end{figure}
\end{center}

\begin{center}
\begin{figure}[h!]
  \includegraphics[scale=0.53]{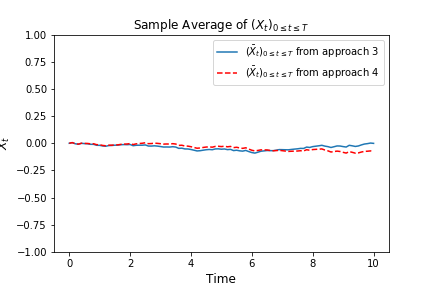}
\includegraphics[scale=0.53]{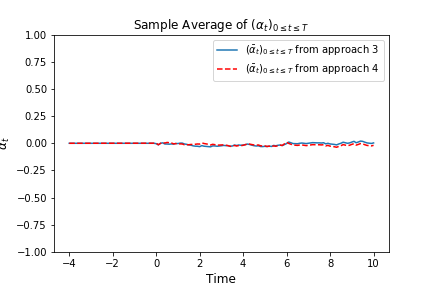}
  \caption{On the left, we compare the sample mean of  optimal trajectories of $(X_{t_i})_{t_0 \leq t_i \leq t_N}$. The plot on the right shows the comparison of sample mean of  trajectories of optimal control $(\alpha_{t_i})_{t_0 \leq t_i \leq t_N}$ between approach 3 and approach 4.}
  \label{fig:mean34}
\end{figure}
\end{center}

\begin{figure}[h!] 
\centering
  \includegraphics[scale=0.53]{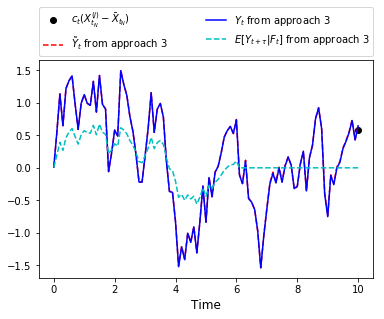}
  \quad
 \includegraphics[scale=0.53]{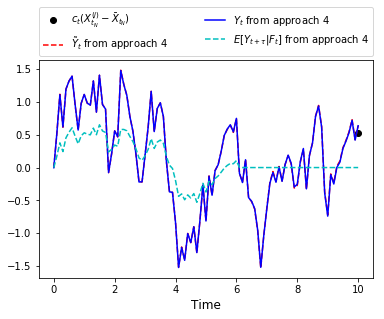}
  \caption{Plots of representative trajectories of ($(Y_{t_i})_{t_0 \leq t_i \leq t_N}, (\tilde{Y}_{t_i})_{t_0 \leq t_i \leq t_N},  (\E[Y_{t_{i+D}} | \F_{t_i}])_{t_0 \leq t_i \leq t_N} $), from approach 3 (one the left) and from approach 4 (on the right).}
  \label{fig:yey} 
\end{figure}
\begin{center}
\begin{figure}[h!]
  \includegraphics[scale=0.53]{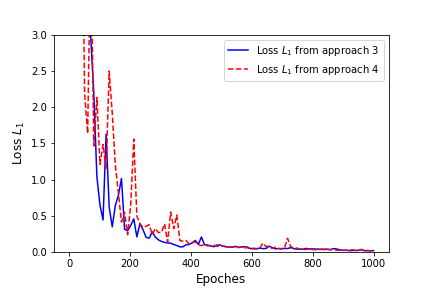}
\includegraphics[scale=0.53]{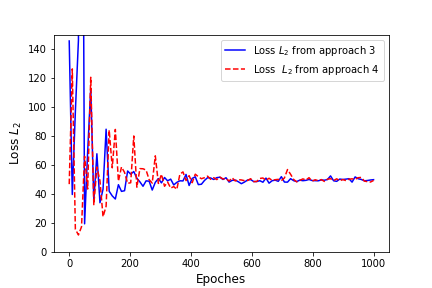}
  \caption{Convergence of loss $L_1$ (on the left) and convergence of loss $L_2$ (on the right) as defined in \eqref{l1l2} from approach 3 and approach 4.}
  \label{fig:l1l2} 
\end{figure}
\end{center}
\begin{figure}[h!]
\centering
  \includegraphics[scale=0.53]{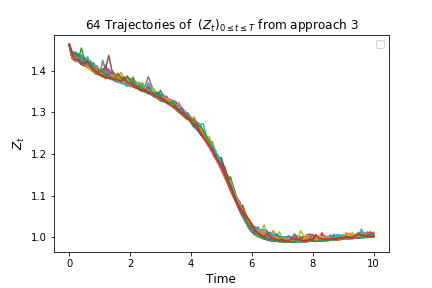}
\includegraphics[scale=0.53]{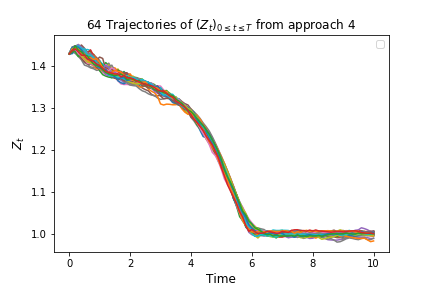}
  \caption{64 trajectories of $(Z_{t_i})_{t_0 \leq t_i \leq t_N}$ based on approach 3 (on the left) and approach 4 (on the right).}
  \label{fig:z} 
\end{figure}
\begin{figure}[h]
\centering
  \includegraphics[scale=0.8]{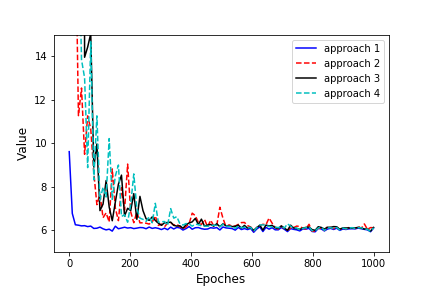}
  \caption{Comparison convergence of objective values as in $\eqref{obj_discrete}$ among four approaches.}
  \label{fig:value} 
\end{figure}

\subsection{Numerically Solving the Optimal Control Problem with No Delay}
Since the algorithms we proposed embrace the case with no delay, we illustrate the comparison between numerical results and the analytical results. By letting $\tau>T$ we obtain $\alpha_{t-\tau} = 0$ in \eqref{SDDE}, and we aim at solving the following linear-quadratic mean-field control problem by minimizing
\begin{equation}
J(\alpha) = \E \left[\int_0^T \left(\frac{1}{2}\alpha_t^2 + \frac{c_f}{2} (X_t - m_t)^2 \right) dt + \frac{c_t}{2} (X_T - m_T)^2  \right],
\end{equation}
subject to
\begin{equation}\label{forward_nodelay}
\begin{aligned}
dX_t = & \alpha_t  dt + \sigma dW_t, \quad t \in [0,T],\\
X_0 = & x_0.
\end{aligned}
\end{equation}
Again, from Section \ref{smp}, we find the optimal control
$$\hat{\alpha}_t = -Y_t,$$
where $(Y_t, Z_t)$ is the solution of the following adjoint process,
\begin{equation}\label{adjoint_nodelay}
dY_t = -c_f(X_t - m_t) dt + Z_t dW_t.
\end{equation}
Next, we make the ansatz
\begin{equation}\label{ansatz}
Y_t = \phi_t (X_t - m_t),
\end{equation}
for some deterministic function $\phi_t$, satisfying the terminal condition $\phi_T = c_t.$ Differentiating the ansatz, the backward equation should satisfy
\begin{equation}
dY_t = (\dot{\phi}_t - \phi_t^2) (X_t - m_t) dt + \phi_t \sigma dW_t,
\end{equation}
where $\dot{\phi}_t$ denotes the time derivative of $\phi_t$. Comparing with \eqref{adjoint_nodelay}, and identifying the drift and volatility term, $\phi_t$ must satisfy the scalar Riccati equation,
\begin{equation}\label{riccati}
\begin{cases}
\dot{\phi}_t = \phi_t^2 - c_f, \\
\phi_T = c_t,
\end{cases}
\end{equation}
and the process $Z_t$ should satisfy
\begin{equation}
Z_t = \phi_t \sigma,
\end{equation}
which is deterministic.  If we choose $x_0 = 0, \ c_f = c_t = 1, T = 10$, $\phi_t = 1$ solves the Riccati equation \eqref{riccati}, so that $Z_t = 1, \ \forall t \in [0,T]$, and from \eqref{ansatz}, the optimal control satisfies
\begin{equation}\label{alpha_optimal}
\hat{\alpha}_t = -(X_t - m_t).
\end{equation}

Numerically, we apply the two deep learning algorithms proposed in the previous section. The first algorithm directly approximates the control. According to the open loop formulation, we set 
\begin{equation}
\begin{aligned}
(a_{t_i}, c_{t_i}) = & \varphi(t_i, (\Delta W_s)_{t_0 \leq s \leq t_i} | \Phi)\\
&  \mbox{ for $\varphi \in \LSTM_{2, h, t}$ and $\Phi = (A_f, A_i, A_o, A_c, U_f, U_i, U_o, U_c, b_f, b_i, b_o, b_c)$}, \\
\alpha(t_i, (W_s)_{t_0 \leq s \leq t_i}) \approx & \psi(a_{t_i} | \Psi ) \mbox{ for $\psi \in \M_{h, 1}^{Id}$ and $\Psi= (A, b)$}, 
\end{aligned}
\end{equation} 
for some $h \in \Z^+$. We remark that the last dense layer is used to match the desired output dimension. The second algorithm numerically solves the forward backward system as in \eqref{forward_nodelay} and \eqref{adjoint_nodelay}. From the ansatz \eqref{ansatz} and the Markovian feature, we approximate $(Y_t, Z_t)_{0 \leq t \leq T}$ using two feedforward neural networks, i.e.,
\begin{equation}
\begin{aligned}
Y_{t_i} \approx &  \psi^1(t_i, X_{t_i} | \Psi^1) \mbox{ for } \psi^1 \in \NN_{2, 1}^l, \ \Psi^1 = (A_{0}^1, b_0^1, \dots, A_{l}^1, b_{l}^1); \\
Z_{t_i} \approx &  \psi^2(t_i, X_{t_i} | \Psi^2) \mbox{ for } \psi^2 \in \NN_{2, 1}^l, \ \Psi^2 = (A_{0}^2, b_0^2, \dots, A_{l}^2, b_{l}^2). \\
\end{aligned}
\end{equation}
Figure \ref{fig:x_alpha_no_delay} shows the representative optimal trajectory of $(X_{t_i} - \bar{X}_{t_i})_{t_0 \leq t_i \leq t_N}$ and $(\alpha_{t_i})_{t_0 \leq t_i \leq t_N}$ from both algorithms, which are exactly the same. The symmetry feature can be seen from the computation  \eqref{alpha_optimal}. On the left of Figure \ref{fig:x_alpha_mean_no_delay} confirms the mean of the processes $(X_{t_i} - \bar{X}_{t_i})_{t_0 \leq t_i \leq t_N}$ and $(\alpha_{t_i})_{t_0 \leq t_i \leq t_N}$  are 0 from both algorithms. The right picture of Figure \ref{fig:x_alpha_mean_no_delay} plots the data points of $x$ against $\alpha$, and we can observe that the optimal control $\alpha$ is linear in $x$ as a result of \eqref{alpha_optimal}, and the slope tends to be -1, since $\phi = 1$ solves the scalar Riccati equation \eqref{riccati}. Finally, Figure \ref{fig:yz_no_delay} plots representative optimal trajectories of the solution to the adjoint equations $(Y_t, Z_t)$. On the left, we observe that the adjoint process $(Y_t)_{0 \leq t \leq T}$ matches the terminal condtion, and on the right, $(Z_t)_{0 \leq t \leq T}$ appears to be a deterministic process of value $1$, and this matches the result we compute previously.

\begin{figure}[h!]
\centering
  \includegraphics[scale=0.53]{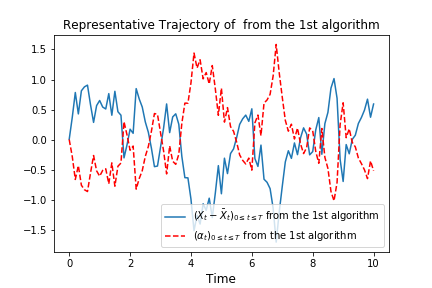}
\includegraphics[scale=0.53]{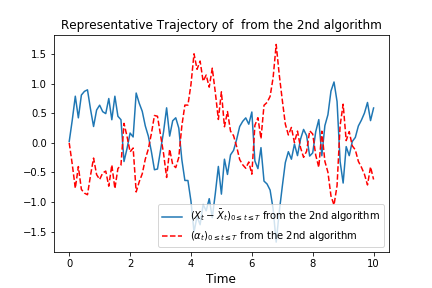}
  \caption{Representative optimal trajectory of$(X_{t_i} - \bar{X}_{t_i})_{t_0 \leq t_i \leq t_N}$ and $(\alpha_{t_i})_{t_0 \leq t_i \leq t_N}$ from algorithm 1 (on the left) and from algorithm 2 (on the right).}
  \label{fig:x_alpha_no_delay} 
\end{figure}
\begin{figure}[h!]
\centering
  \includegraphics[scale=0.53]{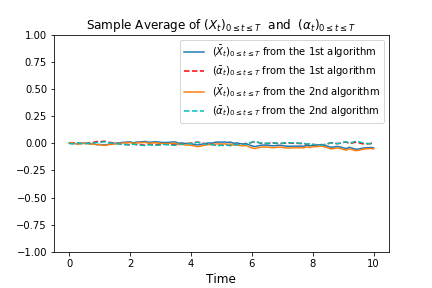}
\includegraphics[scale=0.53]{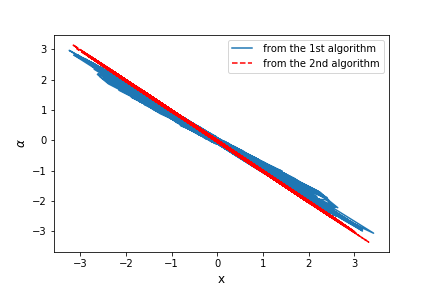}
  \caption{The picture on the left plots the sample averages of $(X_{t_i})_{t_0 \leq t_i \leq t_N}$ and $(\alpha_{t_i})_{t_0 \leq t_i \leq t_N}$ for both algorithm 1 and 2; The plot on the right shows the points of $x$ against $\alpha$ for both algorithms. }
  \label{fig:x_alpha_mean_no_delay} 
\end{figure}
\begin{figure}[h!]
\centering
  \includegraphics[scale=0.53]{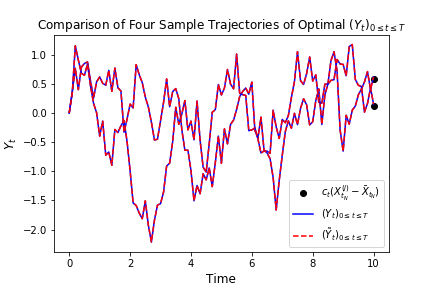}
\includegraphics[scale=0.53]{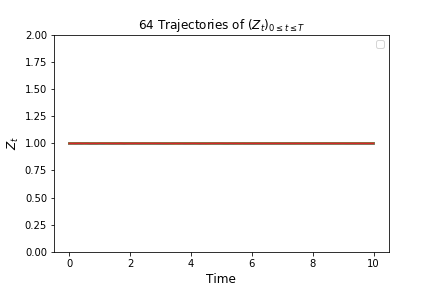}
	\caption{The plot on the left shows representative trajectories of ($(Y_{t_i})_{t_0 \leq t_i \leq t_N}, (\tilde{Y}_{t_i})_{t_0 \leq t_i \leq t_N})$. The picture on the right plots 64 trajectories of $(Z_{t_i})_{t_0 \leq t_i \leq t_N}$ .}
  \label{fig:yz_no_delay} 
\end{figure}

\section{Stochastic Maximum Principle for Optimality} \label{smp}

In this section, we derive the adjoint equation associated to our mean field stochastic control problem \eqref{MV_SDDE} and \eqref{MV_obj}. The necessary and sufficient parts of stochastic maximum principle have been proved for optimality. We assume 
\begin{enumerate}[label = (H4.\arabic*)]
\item $b, \sigma$ are differentiable with respect to $(X_t, \mu_t, X_{t-\tau}, \mu_{t-\tau}, \alpha_t, \alpha_{t-\tau})$; $f$ is differentiable with respect to $(X_t, \mu_t, X_{t-\tau}, \mu_{t-\tau}, \alpha)$; $g$ is differentiable with respect to $(X_T, \mu_T)$. Their derivatives are bounded.
\end{enumerate}

In order to simplify our notations, let $\theta_t = (X_t, \mu_t, \alpha_t)$.  For $0 < \e < 1$, we denote $\alpha^\e$ the admissible control defined by
$$\alpha_t^\e := \alpha_t + \epsilon (\beta_t - \alpha_t) := \alpha_t + \e \Delta \alpha_t,$$
for any $(\alpha)_{0 \leq t \leq T}$ and $(\beta)_{0 \leq t \leq T} \in \A.$ $X_t^{\e}:= X_t^{\alpha^{\e}}$ is the corresponding controlled process. We define $$\nabla X_t := \lim_{\e \to 0} \frac{X_t^\e - X_t^{\alpha}}{\e}$$ to be the variation process, which should follow the following dynamic for $t \in (0, T]$,
\begin{equation}
\begin{aligned}
d \nabla X_t  = & \bigg[ \partial_x b(t, \theta_t, \theta_{t-\tau})\nabla X_{t} + \partial_{x_{\tau}}b(t, \theta_t, \theta_{t-\tau}) \nabla X_{t-\tau} + \tilde{\E} [\partial_\mu b(t, \theta_t, \theta_{t-\tau}) (\tilde{X}_t) \nabla \tilde{X}_t] \\
	&  + \tilde{\E} [\partial_{\mu_\tau} b(t, \theta_t, \theta_{t-\tau}) (\tilde{X}_{t-\tau}) \nabla \tilde{X}_{t-\tau}] + \partial_{\alpha} b(t, \theta_t, \theta_{t-\tau})\Delta \alpha_t + \partial_{\alpha_\tau} b(t, \theta_t, \theta_{t-\tau})\Delta \alpha_{t-\tau} \bigg] dt\\
	& + \bigg[\partial_x \sigma(t, \theta_t, \theta_{t-\tau}) \nabla X_t + \partial_{x_{\tau}}\sigma(t, \theta_t, \theta_{t-\tau}) \nabla X_{t-\tau} + \tilde{\E} [\partial_\mu \sigma(t, \theta_t, \theta_{t-\tau}) (\tilde{X_t}) \nabla \tilde{X_t}] \\
	&  + \tilde{\E} [\partial_{\mu_\tau} \sigma(t, \theta_t, \theta_{t-\tau}) (\tilde{X}_{t-\tau}) \nabla \tilde{X}_{t-\tau}] + \partial_{\alpha} \sigma(t, \theta_t, \theta_{t-\tau})\Delta \alpha_t + \partial_{\alpha_\tau} \sigma(t, \theta_t, \theta_{t-\tau})\Delta \alpha_{t-\tau} \bigg] dW_t  \\
\end{aligned}
\end{equation}
with initial condition $\nabla X_t = \Delta \alpha_t = 0, \  t\in [-\tau, 0)$. $(\tilde{X}_t, \nabla \tilde{X}_t)$ is a copy of $(X_t, \nabla X_t)$ defined on $(\tilde{\W}, \tilde{\F}, \tilde{\P})$, where we apply differential calculus on functions of measure, see \citep{Carmona_Delarue_vol:2018} for detail. $\partial_x b,  \partial_{x_\tau} b, \partial_\mu b, \partial_{\mu_\tau} b,  \partial_\alpha b, \partial_{\alpha_\tau} b$ are derivatives of $b$ with respect to $(X_t, X_{t-\tau}, \mu_t, \mu_{t-\tau}, \alpha_t, \alpha_{t-\tau})$ respectively, and we use the same notation for $\partial_{\cdot} \sigma$.

In the meantime, the Gateaux derivative of functional $\alpha \to J(\alpha)$ is given by 
\begin{equation}\label{j-diff}
\begin{aligned}
& \lim_{\e \to 0} \frac{J(\alpha^\e) - J(\alpha)}{\e}\\
= & \E\left[ \partial_x g(X_T, \mu_T) \nabla X_T + \tilde{\E}[\partial_\mu g(X_T, \mu_T)(\tilde{X}_T) \nabla \tilde{X}_T]\right] \\
	& + \E \int_0^T \bigg[\partial_x f(\theta_t, X_{t-\tau}, \mu_{t-\tau}) \nabla X_t + \tilde{\E} [\partial_\mu f(\theta_t, X_{t-\tau}, \mu_{t-\tau})(\tilde{X}_t) \nabla \tilde{X}_t ] \\
	& + \partial_{x_{\tau}}f(\theta_t, X_{t-\tau}, \mu_{t-\tau}) \nabla X_{t-\tau} + \tilde{\E} [\partial_{\mu_\tau} f(\theta_t, X_{t-\tau}, \mu_{t-\tau}) (\tilde{X}_{t-\tau}) \nabla \tilde{X}_{t-\tau}] \\
	& + \partial_{\alpha} f(\theta_t, X_{t-\tau}, \mu_{t-\tau})(\Delta \alpha_t) \bigg] dt\\
\end{aligned}
\end{equation}

In order to determine the adjoint backward equation of $(Y_t, Z_t)_{0 \leq t \leq T}$ associated to \eqref{MV_SDDE}, we assume it is of the following form:
\begin{equation}
\begin{aligned}
dY_t & = -\varphi_t dt + Z_t dW_t, \quad t \in [0,T], \\
Y_T & = \partial_x g(X_T, \mu_T) + \tilde{\E}[\partial_\mu g(X_T, \mu_T) (\tilde{X}_T)],\\
Y_t & = Z_t = 0, \quad t \in (T, T+\tau]
\end{aligned}
\end{equation}
Next, we apply integration by part to $\nabla X_t$ and $Y_t$. It yields
\begin{align*}
& d(\nabla X_t Y_t)\\
= & Y_t \bigg[ \partial_x b(t, \theta_t, \theta_{t-\tau}) \nabla X_{t} + \partial_{x_{\tau}}b(t, \theta_t, \theta_{t-\tau}) \nabla X_{t-\tau} + \tilde{\E} [\partial_\mu b(t, \theta_t, \theta_{t-\tau}) (\tilde{X}_t) \nabla \tilde{X}_t] \\
	&  + \tilde{\E} [\partial_{\mu_\tau} b(t, \theta_t, \theta_{t-\tau}) (\tilde{X}_{t-\tau}) \nabla \tilde{X}_{t-\tau}] + \partial_{\alpha} b(t, \theta_t, \theta_{t-\tau})\Delta \alpha_t + \partial_{\alpha_\tau} b(t, \theta_t, \theta_{t-\tau})\Delta \alpha_{t-\tau} \bigg] dt\\
	& + Y_t \bigg[\partial_x \sigma(t, \theta_t, \theta_{t-\tau}) + \partial_{x_{\tau}}\sigma(t, \theta_t, \theta_{t-\tau}) \nabla X_{t-\tau} + \tilde{\E} [\partial_\mu \sigma(t, \theta_t, \theta_{t-\tau}) (\tilde{X_t}) \nabla \tilde{X_t} ]\\
	&  + \tilde{\E} [\partial_{\mu_\tau} \sigma(t, \theta_t, \theta_{t-\tau}) (\tilde{X}_{t-\tau}) \nabla \tilde{X}_{t-\tau}] + \partial_{\alpha} \sigma(t, \theta_t, \theta_{t-\tau})\Delta \alpha_t + \partial_{\alpha_\tau} \sigma(t, \theta_t, \theta_{t-\tau})\Delta \alpha_{t-\tau} \bigg] dW_t\\
	& - \varphi_t \nabla X_t dt + \nabla X_t Z_t dW_t\\
	& + Z_t \bigg[\partial_x \sigma(t, \theta_t, \theta_{t-\tau}) \nabla X_t + \partial_{x_{\tau}}\sigma(t, \theta_t, \theta_{t-\tau}) \nabla X_{t-\tau} + \tilde{\E} [\partial_\mu \sigma(t, \theta_t, \theta_{t-\tau}) (\tilde{X_t}) \nabla \tilde{X_t}] \\
	&  + \tilde{\E} [\partial_{\mu_\tau} \sigma(t, \theta_t, \theta_{t-\tau}) (\tilde{X}_{t-\tau}) \nabla \tilde{X}_{t-\tau}] + \partial_{\alpha} \sigma(t, \theta_t, \theta_{t-\tau})\Delta \alpha_t + \partial_{\alpha_\tau} \sigma(t, \theta_t, \theta_{t-\tau})\Delta \alpha_{t-\tau} \bigg] dt
\end{align*}
We integrate from 0 to $T$, and take expectation to get
\begin{equation}\label{E_XTYT_1}
\begin{aligned}
\E [\nabla X_T Y_T] = & \E \int_0^T Y_t \bigg[ \partial_x b(t, \theta_t, \theta_{t-\tau})\nabla X_{t} + \partial_{x_{\tau}}b(t, \theta_t, \theta_{t-\tau}) \nabla X_{t-\tau} + \tilde{\E} [\partial_\mu b(t, \theta_t, \theta_{t-\tau}) (\tilde{X}_t) \nabla \tilde{X}_t] \\
	&  + \tilde{\E} [\partial_{\mu_\tau} b(t, \theta_t, \theta_{t-\tau}) (\tilde{X}_{t-\tau}) \nabla \tilde{X}_{t-\tau}] + \partial_{\alpha} b(t, \theta_t, \theta_{t-\tau})\Delta \alpha_\tau + \partial_{\alpha_{\tau}} b(t, \theta_t, \theta_{t-\tau})\Delta \alpha_{t-\tau} \bigg]  dt\\
	& - \E\int_0^T \varphi_t \nabla X_t dt\\
	& + \E \int_0^T Z_t \bigg[\partial_x \sigma(t, \theta_t, \theta_{t-\tau}) \nabla X_t + \partial_{x_{\tau}}\sigma(t, \theta_t, \theta_{t-\tau}) \nabla X_{t-\tau} + \tilde{\E} [\partial_\mu \sigma(t, \theta_t, \theta_{t-\tau}) (\tilde{X_t}) \nabla \tilde{X_t}] \\
	&  + \tilde{\E} [\partial_{\mu_\tau} \sigma(t, \theta_t, \theta_{t-\tau}) (\tilde{X}_{t-\tau}) \nabla \tilde{X}_{t-\tau}] + \partial_{\alpha} \sigma(t, \theta_t, \theta_{t-\tau})\Delta \alpha_t + \partial_{\alpha_\tau} \sigma(t, \theta_t, \theta_{t-\tau})\Delta \alpha_{t-\tau} \bigg] dt
\end{aligned}
\end{equation}
Using the fact that $Y_t = Z_t = 0 $ for $t \in (T, T+\tau]$, we are able to make a change of time, and by Fubini's theorem, so that \eqref{E_XTYT_1} becomes
\begin{equation}\label{E_XTYT_2}
\begin{aligned}
\E [\nabla X_T Y_T] = & \E \int_0^T \bigg(Y_t \partial_x b(t, \theta_t, \theta_{t-\tau}) + Y_{t+\tau} \partial_{x_{\tau}}b(t + \tau, \theta_{t+\tau}, \theta_{t}) + \tilde{\E} [\partial_\mu b(t, \tilde{\theta}_t, \tilde{\theta}_{t-\tau} ) (X_t)] \\
	&  + \tilde{\E} [\partial_{\mu_\tau} b(t + \tau, \tilde{\theta}_{t+\tau}, \tilde{\theta}_{t}) (X_{t}) \bigg) \nabla X_{t} dt \\
	& + \E \int_0^T \bigg(\partial_{\alpha} b(t, \theta_t, \theta_{t-\tau}) + \partial_{\alpha_{\tau}} b(t+\tau, \theta_{t+\tau}, \theta_{t}) \bigg)\Delta \alpha_{t}  dt\\
	& - \E \bigg[\int_0^T \varphi_t \nabla X_t \bigg] dt\\
	& + \E \int_0^T \bigg( Z_{t} \partial_x \sigma(t, \theta_t, \theta_{t-\tau}) + Z_{t + \tau} \partial_{x_{\tau}}\sigma(t + \tau, \theta_{t+\tau}, \theta_{t}) +   \tilde{\E} [\partial_\mu \sigma(t, \tilde{\theta}_t, \tilde{\theta}_{t-\tau}) (X_t)] \\
	& +   \tilde{\E} [\partial_{\mu_\tau} \sigma(t + \tau, \tilde{\theta}_{t+\tau}, \tilde{\theta}_{t}) (X_{t}) \bigg) \nabla X_{t} dt\\
	& + \E \int_0^T \bigg(\partial_{\alpha} \sigma(t, \theta_t, \theta_{t-\tau}) + \partial_{\alpha_{\tau}} \sigma(t+\tau, \theta_{t+\tau}, \theta_{t}) \bigg)\Delta \alpha_{t}  dt\\
\end{aligned}
\end{equation}
Now we define the Hamiltonian $H$ for $(t, x, \mu, x_\tau, \mu_\tau, y, z, \alpha, \alpha_\tau) \in [0,T] \times \R \times \cP_2(\R) \times \R \times \cP_2(\R) \times \R \times \R  \times A \times A  $ as
\begin{multline}\label{Hamiltonian}
H(t, x, \mu, x_\tau, \mu_{\tau}, \alpha, \alpha_\tau, y, z) = b(t, x, \mu, x_\tau, \mu_\tau, \alpha, \alpha_\tau) y + \sigma(t, x, \mu, x_\tau, \mu_\tau, \alpha, \alpha_\tau) z\\ + f (t, x, \mu, x_\tau, \mu_\tau, \alpha)
\end{multline}
 Using the terminal condition of $Y_T$, and plugging \eqref{E_XTYT_2} into \eqref{j-diff}, and setting the integrand containing $\nabla X_t$ to zero, we are able to obtain the adjoint equation is of the following form
 \begin{equation}\label{adjoint}
 \begin{aligned}
 dY_t = & - \bigg\{\partial_x H(t, X_t, \mu_t, X_{t-\tau}, \mu_{t-\tau}, \alpha_t, \alpha_{t-\tau}, Y_t, Z_t) 
     + \tilde{\E} [\partial_\mu H(t, \tilde{X}_t, \mu_t, \tilde{X}_{t-\tau}, \mu_{t-\tau}, \tilde{\alpha}_t, \tilde{\alpha}_{t-\tau}, \tilde{Y}_t, \tilde{Z}_t)(X_t)] \\
     & + \E[\partial_{x_\tau} H(t+\tau, X_{t+\tau}, \mu_{t+\tau}, X_t, \mu_t, \alpha_{t+\tau}, \alpha_t, Y_{t+\tau}, Z_{t+\tau}) | \F_t]\\
  & + \E [ \tilde{\E} [\partial_{\mu_\tau} H(t+\tau, \tilde{X}_{t+\tau}, \mu_{t+\tau}, \tilde{X}_{t}, \mu_{t}, \tilde{\alpha}_t, \tilde{\alpha}_{t-\tau}, \tilde{Y}_{t+\tau}, \tilde{Z}_{t+\tau})(X_{t+\tau})] | \F_t] \bigg\}dt + Z_t dW_t\\
  Y_T = & \partial_x g(X_T, \mu_T) + \tilde{\E}[\partial_\mu g (\tilde{X}_T, \mu_T) (X_T)].\\
 \end{aligned}
 \end{equation}

\begin{Theorem}\label{necessary}
 Let $(\alpha_t)_{0 \leq t \leq T} \in \A$ be optimal, $(X_t)_{0 \leq t \leq T}$ be the associated controlled state, and $(Y_t, Z_t)_{0 \leq t \leq T}$ be the associated adjoint processes defined in \eqref{adjoint}. For any $\beta \in A$, and $t \in [0,T]$,
\begin{multline}\label{nec}
\bigg( \partial_{\alpha} H(t, X_t, \mu_t, X_{t-\tau}, \mu_{t-\tau}, \alpha_t, \alpha_{t-\tau}, Y_t, Z_t) \\
+ \E[ \partial_{\alpha_\tau} H(t+ \tau, X_{t+\tau}, \mu_{t+\tau}, X_{t}, \mu_{t}, \alpha_{t+\tau}, \alpha_{t}, Y_{t+\tau}, Z_{t+\tau}) | \F_t] \bigg) (\beta - \alpha_t) \geq 0 \mbox{ a.e.} \\
\end{multline}
\end{Theorem}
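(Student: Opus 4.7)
The plan is to begin from the first-order optimality condition and reduce it, via the identity \eqref{E_XTYT_2} and the adjoint equation \eqref{adjoint}, to a variational inequality involving only $\Delta\alpha_t=\beta_t-\alpha_t$. Since $\alpha$ minimizes $J$ and $A$ is convex, for any admissible $(\beta_t)_{0\leq t\leq T}\in\A$ the perturbation $\alpha^\e=\alpha+\e\Delta\alpha$ is admissible for $\e\in[0,1]$, so $\lim_{\e\downarrow 0}(J(\alpha^\e)-J(\alpha))/\e\geq 0$, with the Gateaux derivative given by \eqref{j-diff}. The goal is to cancel every term containing a variation $\nabla X_t$ against the adjoint drift and read off the coefficient of $\Delta\alpha_t$.

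The key technical moves are two rewrites that bring \eqref{j-diff} into the same form as \eqref{E_XTYT_2}. First, I apply the standard copy/swap identity to every $\tilde\E$-term carrying $\nabla\tilde X$: using independence and identical law of $(X,\nabla X)$ and $(\tilde X,\nabla\tilde X)$, an expression $\E\tilde\E[F(\theta_t,\tilde X_t)\nabla\tilde X_t]$ becomes $\E[\tilde\E[F(\tilde\theta_t,X_t)]\nabla X_t]$; inserting the factor $Y_t$ (which is independent of the tilde copy) turns $Y_t$ into $\tilde Y_t$ on the swapped side, producing exactly the $\partial_\mu H$ and $\partial_{\mu_\tau} H$ pieces of the drift in \eqref{adjoint}. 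Second, for every coefficient carrying a delayed variation $\nabla X_{t-\tau}$ or $\Delta\alpha_{t-\tau}$, I make the change of variable $s=t-\tau$ and apply Fubini; together with $\nabla X_s=\Delta\alpha_s=0$ on $[-\tau,0)$ and $Y_s=Z_s=0$ on $(T,T+\tau]$, this is exactly the manipulation already carried out between \eqref{E_XTYT_1} and \eqref{E_XTYT_2}. Adding $\E[\nabla X_T Y_T]$ from \eqref{E_XTYT_2} to the running-cost contributions of \eqref{j-diff} and invoking the terminal condition $Y_T=\partial_x g(X_T,\mu_T)+\tilde\E[\partial_\mu g(\tilde X_T,\mu_T)(X_T)]$, the coefficient of $\nabla X_t$ inside the time integral equals $-\varphi_t$ from \eqref{adjoint} (since $H=by+\sigma z+f$ by \eqref{Hamiltonian}) and therefore vanishes.

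What survives is
\begin{equation*}
\lim_{\e\downarrow 0}\frac{J(\alpha^\e)-J(\alpha)}{\e}
=\E\int_0^T\Bigl(\partial_\alpha H(t,X_t,\mu_t,X_{t-\tau},\mu_{t-\tau},\alpha_t,\alpha_{t-\tau},Y_t,Z_t)+\partial_{\alpha_\tau}H(t+\tau,X_{t+\tau},\mu_{t+\tau},X_t,\mu_t,\alpha_{t+\tau},\alpha_t,Y_{t+\tau},Z_{t+\tau})\Bigr)\Delta\alpha_t\,dt,
\end{equation*}
where the second summand comes from relabelling $\Delta\alpha_{t-\tau}$-terms via $s=t-\tau$. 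Because $\Delta\alpha_t$ is $\F_t$-measurable, replacing the anticipated coefficient by its $\F_t$-conditional expectation inside the outer $\E$ introduces no error, giving $\E\int_0^T\bigl(\partial_\alpha H_t+\E[\partial_{\alpha_\tau}H_{t+\tau}\mid\F_t]\bigr)\Delta\alpha_t\,dt\geq 0$. The pointwise statement \eqref{nec} then follows by the standard needle-variation argument: for any $\beta_0\in A$, $t_0\in[0,T]$, $h>0$ and $E\in\F_{t_0}$, take $\beta_t=\alpha_t+(\beta_0-\alpha_t)\mathbf{1}_E\mathbf{1}_{[t_0,t_0+h]}(t)$, which is admissible by convexity of $A$, and let $h\downarrow 0$.

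The main obstacle is bookkeeping rather than any deep analytic point: one must carry the swap and the $s=t-\tau$ shift simultaneously through the $b$-, $\sigma$-, $f$-, and $g$-contributions without losing track of which copy $(X,Y,Z,\alpha)$ versus $(\tilde X,\tilde Y,\tilde Z,\tilde\alpha)$ each $\partial_\mu$ or $\partial_{\mu_\tau}$ derivative sits at, and check that, after combination, each of the four mean-field derivative groups in the drift of \eqref{adjoint} is matched exactly by the corresponding swapped-and-shifted piece of \eqref{j-diff} plus \eqref{E_XTYT_2}.
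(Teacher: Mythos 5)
Your proposal is correct and follows essentially the same route as the paper: perturb the optimal control, compute the Gateaux derivative \eqref{j-diff}, cancel the $\nabla X_t$ terms against the adjoint drift via the integration-by-parts identity \eqref{E_XTYT_2} (using the Fubini swap for the $\partial_\mu$-terms and the $s=t-\tau$ shift for the delayed terms), and then localize the resulting variational inequality to obtain the pointwise statement. The only cosmetic difference is the final localization — you use a needle variation in time shrinking $h\downarrow 0$, while the paper takes $\beta_t=\beta\mathbf{1}_C+\alpha_t\mathbf{1}_{C'}$ over an arbitrary progressively measurable set $C$ — but both are standard and yield the same a.e. conclusion.
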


\begin{proof}
Given any $(\beta_t)_{0 \leq t \leq T} \in \A$, we perturbate $\alpha_t$ by $\e(\beta_t - \alpha_t)$ and we define  $\alpha_t^{\e} := \alpha_t + \e (\beta_t - \alpha_t)$ for $0 \leq \e \leq 1$.  Using the adjoint process \eqref{adjoint}, and apply integration by parts formula to $(\nabla X_t Y_t)$. Then plug the result into \eqref{j-diff}, and the Hamiltonian $H$ is defined in \eqref{Hamiltonian}. Also, since  $\alpha$ is optimal, we have
\begin{equation}
\begin{aligned}
0 \leq  & \lim_{\e \to 0} \frac{J(\alpha^\e) - J(\alpha)}{\e}\\
= & \E \int_0^T \bigg([\partial_{\alpha} H(t, \theta_t, \theta_{t-\tau}, Y_t, Z_t) + \E[\partial_{\alpha_\tau}H(t+\tau, \theta_{t+\tau}, \theta_{t}, Y_{t+\tau}, Z_{t+\tau}) | \F_t] \bigg) (\beta_t - \alpha_t)dt
\end{aligned}
\end{equation}
Now, let $C \in \F_t$ be an arbitrary progressively measurable set, and denote $C'$ the complement of $C$. We choose $\beta_t$ to be $\beta_t:= \beta \1_C + \alpha_t \1_{C'}$ for any given $\beta \in A$. Then, 
\begin{equation}
\begin{aligned}
\E \int_0^T \bigg([\partial_{\alpha} H(t, \theta_t, \theta_{t-\tau}, Y_t, Z_t) + \E[\partial_{\alpha_\tau}H(t+\tau, \theta_{t+\tau}, \theta_{t}, Y_{t+\tau}, Z_{t+\tau}) | \F_t] \bigg) (\beta_t - \alpha_t) \1_C dt \geq 0,
\end{aligned}
\end{equation}
which implies, 
\begin{equation}
(\partial_\alpha H(t, \theta_t, \theta_{t-\tau}, Y_t, Z_t) + \E[\partial_{\alpha_\tau}H(t+\tau, \theta_{t+\tau}, \theta_{t}, Y_{t+\tau}, Z_{t+\tau}) | \F_t] ) (\beta - \alpha_t) \geq 0. \mbox{ a.e. }
\end{equation}
\end{proof}

\begin{Remark}
When we further assume that $H$ is convex in $(\alpha_t, \alpha_{t-\tau})$, then for any $\beta, \beta_\tau \in A$ in Theorem \ref{necessary}, we have 
$$ H(t, X_t, \mu_t, X_{t-\tau}, \mu_{t-\tau}, \alpha_t, \alpha_{t-\tau}, Y_t, Z_t) \leq H(t, X_t, \mu_t, X_{t-\tau}, \mu_{t-\tau}, \beta, \beta_{\tau}, Y_t, Z_t), \mbox{ a.e.} $$
as a direct consequence of \eqref{nec}.
\end{Remark}

\begin{Theorem}
Let $(\alpha_t)_{0 \leq t \leq T} \in \A$ be an admissible control. Let $(X_t)_{0 \leq t \leq T}$ be the controlled state, and $(Y_t, Z_t)_{0 \leq t \leq T}$ be the corresponding adjoint processes. We further assume that for each $t$, given $Y_t$ and $Z_t$, the function $(x, \mu, x_{\tau}, \mu_\tau, \alpha, \alpha_{\tau}) \to H(t, x, \mu, x_{\tau}, \mu_\tau, \alpha, \alpha_{\tau}, Y_t, Z_t)$, and the function $(x, \mu) \to g(x, \mu)$ are convex. If 
\begin{equation}\label{maxh}
\begin{aligned}
H(t, X_t, \mu_t, X_{t-\tau}, \mu_{t-\tau}, \alpha_t, \alpha_{t-\tau}, Y_t, Z_t) 
= \inf_{\alpha' \in \A} H(t, X_t, \mu_t, X_{t-\tau}, \mu_{t-\tau}, \alpha'_t, \alpha'_{t-\tau}, Y_t, Z_t) ,
\end{aligned}
\end{equation}
for all $t$, then $(\alpha_t)_{0 \leq t \leq T}$ is an optimal control.

\end{Theorem}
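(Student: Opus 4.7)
The plan is to prove optimality by showing $J(\alpha') - J(\alpha) \geq 0$ for any admissible $\alpha' \in \A$ via a convexity/duality argument that mirrors, but goes in the reverse direction of, the computation used to derive the adjoint equation \eqref{adjoint}. Let $(X'_t)$ be the state associated with $\alpha'$ and write $\theta'_t = (X'_t, \mu'_t, \alpha'_t)$, $\Delta X_t = X'_t - X_t$, $\Delta \alpha_t = \alpha'_t - \alpha_t$. Using $f = H - b y - \sigma z$ from the definition \eqref{Hamiltonian},
\begin{equation*}
J(\alpha') - J(\alpha) = \E\bigl[g(X'_T,\mu'_T) - g(X_T,\mu_T)\bigr] + \E\int_0^T \bigl[H(t,\theta'_t,\theta'_{t-\tau},Y_t,Z_t) - H(t,\theta_t,\theta_{t-\tau},Y_t,Z_t)\bigr]\,dt - R,
\end{equation*}
where $R := \E\int_0^T \bigl\{[b(t,\theta'_t,\theta'_{t-\tau}) - b(t,\theta_t,\theta_{t-\tau})]Y_t + [\sigma(t,\theta'_t,\theta'_{t-\tau}) - \sigma(t,\theta_t,\theta_{t-\tau})]Z_t\bigr\}\,dt$ groups the $b,\sigma$ contributions.

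Next I would compute $\E[\Delta X_T\, Y_T]$ by Itô's product rule applied to $\Delta X_t Y_t$. Because $\Delta X$ satisfies an SDE whose drift and diffusion are exactly the \emph{finite differences} of $b$ and $\sigma$ evaluated at $\theta'$ and $\theta$, the stochastic integral vanishes in expectation and we obtain, after substituting the adjoint dynamics \eqref{adjoint} and performing the same change of time (shift by $\tau$) and Fubini swap used in \eqref{E_XTYT_2} to pull the $t+\tau$ terms back, an identity of the form
\begin{equation*}
R = \E[\Delta X_T Y_T] + \E\int_0^T \Bigl\{\partial_x H \cdot \Delta X_t + \partial_{x_\tau} H^{(t+\tau)}\! \cdot \Delta X_t + \tilde\E[\partial_\mu H(X_t)] + \tilde\E[\partial_{\mu_\tau} H^{(t+\tau)}(X_t)]\Bigr\}\,dt,
\end{equation*}
where the conditional expectations are absorbed by the outer $\E$ and all Hamiltonian derivatives are evaluated at the optimal trajectory. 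The terminal piece is handled by the convexity of $g$: it gives
\begin{equation*}
\E[g(X'_T,\mu'_T) - g(X_T,\mu_T)] \geq \E\bigl[\partial_x g(X_T,\mu_T)\Delta X_T + \tilde\E[\partial_\mu g(X_T,\mu_T)(\tilde X_T)\,\Delta\tilde X_T]\bigr] = \E[\Delta X_T\, Y_T],
\end{equation*}
using the terminal condition of \eqref{adjoint} together with the standard interchange of the tilde and untilded copies.

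Combining the two displays, the $\E[\Delta X_T Y_T]$ terms cancel and $J(\alpha') - J(\alpha)$ is bounded below by the integral of $[H(t,\theta'_t,\theta'_{t-\tau},Y_t,Z_t) - H(t,\theta_t,\theta_{t-\tau},Y_t,Z_t)]$ minus the linear-in-$\Delta X$, $\tilde\E[\partial_\mu H \Delta \tilde X]$, and $\tilde\E[\partial_{\mu_\tau} H \Delta \tilde X]$ pieces (with the $\alpha$- and $\alpha_\tau$-linear pieces kept separate). Convexity of $H$ in $(x,\mu,x_\tau,\mu_\tau,\alpha,\alpha_\tau)$ furnishes exactly the subgradient inequality needed to dominate these linear pieces, leaving behind only the $\alpha$-derivative terms:
\begin{equation*}
J(\alpha') - J(\alpha) \geq \E\int_0^T \bigl(\partial_\alpha H(t,\theta_t,\theta_{t-\tau},Y_t,Z_t) + \E[\partial_{\alpha_\tau} H(t+\tau,\theta_{t+\tau},\theta_t,Y_{t+\tau},Z_{t+\tau})\mid \F_t]\bigr)\Delta\alpha_t\,dt,
\end{equation*}
after another Fubini time-shift on the $\alpha_\tau$ term. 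The minimization hypothesis \eqref{maxh}, via the first-order condition derived in Theorem \ref{necessary}, makes this integrand nonnegative for every admissible $\alpha'$, so $J(\alpha') \geq J(\alpha)$.

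The main obstacle will be the bookkeeping around the tilde-copy measure derivatives and the $t \mapsto t+\tau$ time shifts: one must verify that the finite-difference analog of the Gateaux derivation in Section \ref{smp} still closes up, using the boundary conditions $Y_t = Z_t = 0$ on $(T,T+\tau]$ and $\Delta X_t = 0$ on $[-\tau,0)$, and that convexity in the measure argument is compatible with the $\partial_\mu$ terms appearing with the independent copy $\tilde X$. Once this is set up, convexity of $H$ and $g$ plus the first-order optimality condition close the argument directly.
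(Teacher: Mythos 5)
Your proposal is correct and follows essentially the same route as the paper's proof: convexity of $g$ plus the terminal condition to control $\E[\Delta X_T Y_T]$, integration by parts on $\Delta X_t Y_t$ with the Fubini time-shift exploiting $Y_t = Z_t = 0$ on $(T, T+\tau]$, the rewriting $f = H - bY - \sigma Z$ so the $b$ and $\sigma$ terms cancel, and finally convexity of $H$ together with the first-order variational inequality implied by \eqref{maxh}. The only cosmetic difference is that you bound $J(\alpha') - J(\alpha)$ from below rather than $J(\alpha) - J(\alpha')$ from above, and you should cite the first-order condition as a consequence of the minimization hypothesis \eqref{maxh} on the convex set $A$ rather than of Theorem \ref{necessary}, whose hypothesis is the optimality you are trying to prove.
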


\begin{proof}
Let $(\alpha_t')_{0 \leq t \leq T} \in \A$ be a admissible control, and let $(X'_t)_{0 \leq t \leq T} = (X^{\alpha'}_t)_{_{0 \leq t \leq T}}$ be the corresponding controlled state. From the definition of the objective function as in \eqref{MV_obj}, we first use convexity of $g$, and the terminal condition of the adjoint process $Y_t$ in \eqref{adjoint}, then use the fact that $H$ is convex, and because of \eqref{maxh}, we have the following

\begin{equation}
\begin{aligned}
& J(\alpha) - J(\alpha')\\
= & \E[g(X_T, \mu_T) - g(X_T', \mu_T')] + \E\int_0^T [f(t, \theta_t, X_{t-\tau}, \alpha_{t-\tau}) - f(t, \theta_t', X'_{t-\tau}, \alpha'_{t-\tau}) ] dt\\
\leq & \E[\partial_x g(X_T, \mu_T)(X_T - X_T') + \tilde{\E}[\partial_{\mu} g(X_T, \mu_T) (\tilde{X}_T) (\tilde{X}_T - \tilde{X}_T')]]\\
    & + \E\int_0^T [f(t, \theta_t, X_{t-\tau}, \alpha_{t-\tau}) - f(t, \theta_t', X'_{t-\tau}, \alpha'_{t-\tau})] dt\\
= & \E[(\partial_x g(X_T, \mu_T) + \tilde{E} [\partial_\mu g(\tilde{X}_T, \mu_T) (X_T)]) (X_T - X_T')]\\
    & + \E\int_0^T [f(t, \theta_t, X_{t-\tau}, \alpha_{t-\tau}) - f(t, \theta_t', X'_{t-\tau}, \alpha'_{t-\tau})] dt\\
= & \E[Y_T (X_T - X_T')] + \E\int_0^T [f(t, \theta_t, X_{t-\tau}, \alpha_{t-\tau}) - f(t, \theta_t', X'_{t-\tau}, \alpha'_{t-\tau})] dt\\
= & \E\int_0^T \bigg[ \left(b(t, \theta_t, \theta_{t-\tau}) - b(t, \theta_t', \theta_{t-\tau}') \right)Y_t + \left(\sigma(t, \theta_t, \theta_{t-\tau}) - \sigma(t, \theta_t', \theta_{t-\tau}') \right)Z_t \bigg] dt\\
    & - \E \int_0^T \bigg[ \left(  \partial_x H(t, \theta_{t}, \theta_{t-\tau}, Y_t, Z_t) + \tilde{\E}[\partial_\mu H (t, \tilde{\theta}_t, \tilde{\theta}_{t-\tau}, \tilde{Y}_t, \tilde{Z}_t)(X_t)] \right) (X_t - X_t') \bigg]dt\\
    & -\E \int_0^T \bigg[ \bigg( \E[\partial_{x_\tau}  H(t+\tau, \theta_{t+\tau}, \theta_{t}, Y_{t+\tau}, Z_{t+\tau}) | \F_t ] \\
    & + \E [ \tilde{\E} [\partial_{\mu_\tau} H(t+\tau, \tilde{\theta}_{t+\tau}, \tilde{\theta}_t, \tilde{Y}_{t+\tau}, \tilde{Z}_{t+\tau})(X_{t})] | \F_t] \bigg) (X_t - X_t')\bigg] dt \\
    & + \E \int_0^T \bigg[H(t, \theta_t, \theta_{t-\tau}, Y_t, Z_t) - H(t, \theta_t', \theta_{t-\tau}', Y_t, Z_t) \bigg] dt\\
    &+ \E \int_0^T \bigg[ \left(b(t, \theta_t, \theta_{t-\tau}) - b(t, \theta_t, \theta_{t-\tau}) \right)Y_t + \left(\sigma(t, \theta_t, \theta_{t-\tau}) - \sigma(t, \theta_t, \theta_{t-\tau}) \right)Z_t \bigg] dt\\
\leq & - \E \int_0^T \bigg[\partial_x H (t, \theta_t, \theta_{t-\tau}, Y_t, Z_t)(X_t - X_t') + \tilde{\E} [\partial_\mu H (t, \theta_t, \theta_{t-\tau}, Y_t, Z_t) (\tilde{X}_t) (\tilde{X}_t - \tilde{X_t}')\bigg] dt\\
    & -  \E \int_0^T \bigg[\partial_{x_\tau} H (t, \theta_t, \theta_{t-\tau}, Y_t, Z_t) (X_{t-\tau} - X_{t-\tau}') + \tilde{\E} [\partial_{\mu_\tau} H (t, \theta_t, \theta_{t-\tau}, Y_t, Z_t) (\tilde{X}_{t-\tau}) (\tilde{X}_{t-\tau} - \tilde{X}_{t-\tau}')] \bigg] dt\\
    & + \E \int_0^T \bigg[H (t, \theta_t, \theta_{t-\tau}, Y_t, Z_t) - H (t, \theta_t', \theta_{t-\tau}', Y_t, Z_t) \bigg] dt\\
\leq & \E \int_0^T \bigg[\partial_{\alpha} H (t, \theta_t, \theta_{t-\tau}, Y_t, Z_t)(\alpha_t - \alpha'_t) + \partial_{\alpha_{\tau}} H (t, \theta_t, \theta_{t-\tau}, Y_t, Z_t) (\alpha_{t-\tau} - \alpha_{t-\tau}') \bigg] dt\\
\leq & \E \int_0^T \bigg(\partial_{\alpha} H (t, \theta_t, \theta_{t-\tau}, Y_t, Z_t) + \E[ \partial_{\alpha_\tau}H (t+\tau, \theta_{t+\tau}, \theta_{t}, Y_{t+\tau}, Z_{t+\tau)} | \F_t] \bigg)(\alpha_t - \alpha_t') dt\\
\leq & 0.
\end{aligned}
\end{equation}

\end{proof}

\section{Existence and Uniqueness Result}\label{uniqueness}

Given the necessary and sufficient conditions proven in Section \ref{smp}, we use the optimal control $(\hat{\alpha}_t)_{0\leq t\leq T}$ defined by
\begin{equation}
\begin{aligned}
& \hat{\alpha}(t,X_t, \mu_t, X_{t-\tau}, \mu_{t-\tau}, Y_t, Z_t, \E[Y_{t+\tau} | \F_t], \E[Z_{t+\tau} | \F_t]) \\
    = & \arg \min_{\alpha \in \A}  H(t, X_t, \mu_t, X_{t-\tau}, \mu_{t-\tau}, \alpha_t, \alpha_{t-\tau}, Y_t, Z_t),
\end{aligned}
\end{equation}
to establish the solvability result of the McKean-Vlasov FABSDE \eqref{MV_SDDE} and \eqref{adjoint} for $t \in [0,T]$:
\begin{equation}\label{FABSDE}
\begin{aligned}
dX_t = & b(t, X_t, \mu_t, X_{t-\tau}, \mu_{t-\tau}, \hat{\alpha}_t, \hat{\alpha}_{t-\tau}) dt + \sigma(t, X_t, \mu_t, X_{t-\tau}, \mu_{t-\tau}, \hat{\alpha}_t, \hat{\alpha}_{t-\tau}) dW_t,\\
dY_t = & - \bigg \{\partial_x H(t, X_t, \mu_t, X_{t-\tau}, \mu_{t-\tau}, \hat{\alpha}_t, \hat{\alpha}_{t-\tau}, Y_t, Z_t) 
     + \tilde{\E} [\partial_\mu H(t, \tilde{X}_t, \mu_t,  \tilde{X}_{t-\tau}, \mu_{t-\tau}, \tilde{\hat{\alpha}}_t, \tilde{\hat{\alpha}}_{t-\tau}, \tilde{Y}_t, \tilde{Z}_t)(X_t)] \\
     & + \E[\partial_{x_\tau} H(t+\tau, X_{t+\tau}, \mu_{t+\tau}, X_t, \mu_t, \hat{\alpha}_{t+\tau}, \hat{\alpha}_t, Y_{t+\tau}, Z_{t+\tau}) | \F_t]\\
  & + \E [ \tilde{\E} [\partial_{\mu_\tau} H(t+\tau, \tilde{X}_{t+\tau}, \mu_{t+\tau}, \tilde{X}_t, \mu_t, \tilde{\hat{\alpha}}_{t+\tau}, \tilde{\hat{\alpha}}_{t}, \tilde{Y}_{t+\tau}, \tilde{Z}_{t+\tau})(X_{t})] | \F_t] \bigg\}dt + Z_t dW_t\\
\end{aligned}
\end{equation}
with initial condition $X_0 = x_0;   X_t  = \hat{\alpha}_t  = 0 \mbox{ for } t \in [-\tau, 0)$ and terminal condition $Y_T = \partial_x g(X_T, \mu_T) + \tilde{\E}[\partial_\mu g (\tilde{X}_T, \mu_T) (X_T)].$
In addition to assumption (H 4.1), we further assume
\begin{enumerate}[label = (H5.\arabic*)]
\item The drift and volatility functions $b$ and $\sigma$ are linear in $x, \mu, x_{\tau}, \mu_{\tau}, \alpha, \alpha_{\tau}$. For all $(t, x, \mu, x_\tau, \mu_\tau, \alpha, \alpha_\tau) \in [0, T] \times \R \times \cP_2(\R) \times \cP_2(\R) \times A \times A$, we assume that
\begin{equation}
\begin{aligned}
b(t, x, \mu, x_\tau, \mu_\tau, \alpha, \alpha_\tau) =& b_0(t) + b_1(t)x + \bar{b}_1(t) m + b_2(t) x_\tau + \bar{b}_2(t) m_\tau + b_3(t) \alpha + b_4(t) \alpha_\tau,\\
\sigma(t, x, \mu, x_\tau, \mu_\tau, \alpha, \alpha_\tau) =& \sigma_0(t) + \sigma_1(t)x + \bar{\sigma}_1(t) m + \sigma_2(t) x_\tau + \bar{\sigma}_2(t) m_\tau + \sigma_3(t) \alpha + \sigma_4(t) \alpha_\tau,\\
\end{aligned}
\end{equation}
for some measurable deterministic functions $b_0, b_1, \bar{b}_1, b_2, \bar{b}_2, b_3, b_4, \sigma_0, \sigma_1, \bar{\sigma}_1, \sigma_2, \bar{\sigma}_2, \sigma_3, \sigma_4$ with values in $\R$ bounded by $R$, and we have used the notation $m = \int x d \mu(x)$ and $m_\tau = \int x d \mu_\tau(x)$ for the mean of measures $\mu$ and $\mu_\tau$ respectively.
\item The derivatives of $f$ and $g$ with respect to $(x, x_\tau, \mu, \mu_\tau, \alpha)$ and $(x, \mu)$ are Lipschitz continuous with Lipschitz constant $L$.


\item The function $f$ is strongly L-convex, which means 
that for any $t \in [0,T]$, any $x, x', x_\tau, x_\tau' \in \R$, any $\alpha, \alpha' \in A$, any $\mu, \mu', \mu_\tau, \mu_\tau' \in \cP_2(\R)$,  any random variables $X$ and $X'$ having $\mu$ and $\mu'$ as distribution, and  any random variables $X_\tau$ and $X'_\tau$ having $\mu_\tau$ and $\mu'_\tau$ as distribution, then
\begin{equation}
\begin{aligned}
& f(t, x', \mu', x_\tau', \mu_\tau', \alpha') - f(t, x, \mu, x_\tau, \mu_\tau, \alpha) \\
&  - \partial_x f(t, x, \mu, x_\tau, \mu_\tau, \alpha)(x'-x)
 - \partial_{x_\tau} f(t, x, \mu, x_\tau, \mu_\tau, \alpha)(x_\tau'-x_\tau)\\
&  - \E[\partial_\mu f(t, x, \mu, x_\tau, \mu_\tau, \alpha)(X) \cdot (X' - X)]
 - \E[\partial_{\mu_\tau} f(t, x, \mu, x_\tau, \mu_\tau, \alpha)(X_\tau) \cdot (X_\tau' - X_\tau)]\\
&  - \partial_\alpha f(t, x, \mu, x_\tau, \mu_\tau, \alpha)(\alpha'-\alpha)
\geq \kappa |\alpha' - \alpha|^2.
\end{aligned}
\end{equation}
The function $g$ is also assumed to be $L$-convex in $(x, \mu)$.
\end{enumerate}

\begin{Theorem}\label{existence}
Under assumptions (H5.1-H5.3), the McKean-Vlasov FABSDE \eqref{FABSDE} is uniquely solvable.
\end{Theorem}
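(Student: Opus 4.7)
The plan is to apply the method of continuation, following the strategy used in the references cited just after equation (2.5) of the introduction. Introduce a one-parameter family of MV-FABSDE systems indexed by $\rho \in [0,1]$, obtained by taking convex combinations between a trivially solvable reference problem at $\rho = 0$ (e.g.\ drift and $\mu$-dependence suppressed so that the forward SDE decouples from the backward equation) and the target system \eqref{FABSDE} at $\rho = 1$. For each $\rho$, the interpolated coefficients should preserve the linearity of $(b,\sigma)$ from (H5.1) and the convexity/Lipschitz structure of $(f,g)$ from (H5.2)--(H5.3), along with inhomogeneous forcing terms $(I_t^X, I_t^Y, I_T)$ inserted to make the scheme closed under the continuation. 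Let $S \subset [0,1]$ be the set of $\rho$ for which the $\rho$-problem is uniquely solvable in the natural space $\mathbb{S}^2 \times \mathbb{S}^2 \times \mathbb{H}^2$ for every square-integrable forcing. The aim is to show that $S$ is nonempty, open, and closed in $[0,1]$, hence $S = [0,1]$.

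The heart of the argument is an a priori estimate. Given two solutions $(X, Y, Z)$ and $(X', Y', Z')$ of the $\rho$-problem corresponding to forcings $(I, I')$, set $\Delta X = X - X'$, $\Delta Y = Y - Y'$, $\Delta Z = Z - Z'$, $\Delta \hat\alpha = \hat\alpha - \hat\alpha'$. Apply Itô's formula to $\Delta X_t \, \Delta Y_t$ on $[0,T]$ and take expectation. The terminal contribution $\E[\Delta X_T \Delta Y_T]$ is handled by the $L$-convexity of $g$ in (H5.3), which yields a nonnegative quadratic in $\Delta X_T$. The drift contribution, after the time-shift/Fubini trick used in the passage from \eqref{E_XTYT_1} to \eqref{E_XTYT_2} (which is what transforms the anticipated conditional expectations $\E[Y_{t+\tau}|\mathcal{F}_t]$ into integrals against $\nabla X_t$), combines with the strong $\kappa$-convexity of $f$ to produce a lower bound of the form
\begin{equation*}
\E[\Delta X_T \Delta Y_T] + \E\!\int_0^T \mathcal{Q}_t \, dt \geq \kappa \, \E\!\int_0^T |\Delta \hat\alpha_t|^2 \, dt,
\end{equation*}
where $\mathcal{Q}_t$ collects Hamiltonian-derivative cross-terms. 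Bounding the left-hand side by the forcings through Cauchy--Schwarz and Young's inequalities yields $\|\Delta \hat\alpha\|_{L^2}^2 \lesssim \|I - I'\|^2$; plugging this back into standard SDE and (anticipated) BSDE estimates, which rely on (H5.1), (H5.2) and the Lipschitz property of $\hat\alpha$ in its arguments, closes the estimate for $(\Delta X, \Delta Y, \Delta Z)$. Given this estimate, openness of $S$ follows from a Picard iteration on the increment $\rho \mapsto \rho + \delta$ with $\delta$ independent of $\rho_0$, and closedness follows from the same estimate applied to a Cauchy sequence of solutions.

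The main obstacle I expect is controlling the interplay between the delay in the forward equation, the anticipation in the backward equation, and the McKean-Vlasov law dependence. Concretely, the anticipated terms $\E[Y_{t+\tau}|\mathcal{F}_t]$ and $\E[\tilde\E[\partial_{\mu_\tau} H(\ldots)(X_{t+\tau})]|\mathcal{F}_t]$ force one to work on the extended interval $[0, T+\tau]$ with the convention $Y_s = Z_s = 0$ for $s \in (T, T+\tau]$, and the Fubini time-shift must be executed carefully so that the $L$-monotonicity of the Hamiltonian survives after integration; in particular, one must verify that the copy-expectations against an independent probability space $\tilde\P$ combine with the conditional expectations to produce a genuine $\kappa$-coercive bound in $\Delta\hat\alpha$, rather than an indefinite quadratic form. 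This is also where the boundedness hypothesis on the coefficients in (H5.1) and the smallness that is implicit in the continuation step must be reconciled; if a global-in-$T$ estimate fails, one restricts first to a small horizon and iterates, which is routine once the one-step estimate has been established.
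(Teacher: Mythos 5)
Your proposal is correct and follows essentially the same route as the paper: the method of continuation with inhomogeneous forcing terms, a contraction/Picard step of uniform size $\delta_0$ obtained from the a priori estimate on $\Delta X_t\,\Delta Y_t$ combined with the $L$-convexity of $g$, the strong $\kappa$-convexity of $f$, the Fubini time-shift for the anticipated terms, and standard Gronwall-type SDE/BSDE estimates. The paper packages the connectedness argument as an explicit induction in $n$ steps of length $\delta_0$ rather than your ``nonempty, open, closed'' formulation, but this is the same argument.
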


The proof is based on continuation methods. Let $\lambda \in [0, 1]$, consider the following class of  McKean-Vlasov FABSDEs, denoted by MV-FABSDE($\lambda$), for $t \in [0, T]$:
\begin{equation}\label{FABSDE_lambda}
\begin{aligned}
dX_t = & (\lambda b(t, \theta_t, \theta_{t-\tau})  + I_t^b )dt + (\lambda \sigma(t, \theta_t, \theta_{t-\tau}) + I_t^{\sigma}) dW_t,\\
dY_t = & - \bigg\{ \lambda \bigg(\partial_x H(t, \theta_t, \theta_{t-\tau}, Y_t, Z_t) 
     + \tilde{\E} [\partial_\mu H(t, \tilde{\theta}_t, \tilde{\theta}_{t-\tau}, \tilde{Y}_t, \tilde{Z}_t)(X_t)] \\
     & + \E[\partial_{x_\tau} H(t+\tau, \theta_{t+\tau}, \theta_{t}, Y_{t+\tau}, Z_{t+\tau}) | \F_t]
      + \E [ \tilde{\E} [\partial_{\mu_\tau} H(t+\tau, \tilde{\theta}_{t+\tau}, \tilde{\theta}_{t}, \tilde{Y}_{t+\tau}, \tilde{Z}_{t+\tau})(X_{t})] | \F_t] \bigg) + I_t^f \bigg\} dt \\
      & + Z_t dW_t, \\
\end{aligned}
\end{equation}
where we denote $\theta_t = (X_t, \mu_t, \alpha_t)$, with optimality condition $$\alpha_t = \hat{\alpha}(t, X_t, \mu_t, X_{t-\tau}, \mu_{t-\tau}, Y_t, Z_t, \E[Y_{t+\tau} | \F_t], \E[Z_{t+\tau} | \F_t]), t \in [0,T],$$
and with initial condition $X_0 = x_0;   X_t  = \alpha_t  = 0 \mbox{ for } t \in [-\tau, 0)$ and terminal condition $$Y_T =  \lambda \bigg\{ \partial_x g(X_T, \mu_T) + \tilde{\E}[\partial_\mu g (\tilde{X}_T, \mu_T) (X_T) \bigg\} + I_T^g,$$ and $Y_t = 0$ for $t \in (T, T+\tau]$,
where $(I^b_t, I^{\sigma}_t, I^{f}_t)_{0 \leq t \leq T}$ are some square-integrable progressively measurable processes with values in $\R$, and $I_T^g \in L^2(\W, \F_T, \P)$ is a square integrable $\F_T$-measurable random variable with value in $\R$. 

Observe that when $\lambda = 0$, system \eqref{FABSDE_lambda} becomes decoupled standard SDE and BSDE, which has an unique solution. When setting $\lambda = 1, \ I_t^b = I_t^{\sigma} = I_t^f = 0$ for $0 \leq t \leq T$, and $I_T^g = 0$, we are able to recover the system of \eqref{FABSDE}. 

\begin{Lemma}\label{lemma}
Given $\lambda_0 \in [0,1)$, for any square-integrable progressively measurable processes $(I^b_t, I^{\sigma}_t, I^{f}_t)_{0 \leq t \leq T}$, and $I_T^g \in L^2(\W, \F_T, \P)$, such that system FABSDE($\lambda_0$) admits a unique solution, then there exists $\delta_0 \in (0,1)$, which is independent on $\lambda_0$, such that the system MV-FABSDE($\lambda$) admits a unique solution for any $\lambda \in [\lambda_0, \lambda_0 + \delta_0]$.
\end{Lemma}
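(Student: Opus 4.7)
The plan is to apply the classical method of continuation for coupled FBSDE systems, as in \citep{Peng_Wu:1999, Bensoussan_Yam_Zhang:2015, Carmona_Fouque_Mousavi_Sun:2018}, suitably adapted to the McKean-Vlasov anticipated setting. Under the hypothesis that MV-FABSDE($\lambda_0$) is solvable for arbitrary square-integrable input data $(I^b, I^\sigma, I^f, I^g)$, I will construct the solution at $\lambda_0 + \delta$ as the unique fixed point of an auxiliary map $\Phi$ defined on a Hilbert space $\mathcal{H}$ of quadruples $(U, V, W, \beta)$ of progressively measurable processes extended to $[-\tau, T+\tau]$ with the zero boundary conditions $U_t = \beta_t = 0$ on $[-\tau, 0)$ and $V_t = W_t = 0$ on $(T, T+\tau]$.

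Given $(U, V, W, \beta) \in \mathcal{H}$, using the shorthand $\theta^U_t = (U_t, \L(U_t), \beta_t)$, I will augment the reference inputs $(I^b, I^\sigma, I^f, I^g)$ by adding $\delta b(t, \theta^U_t, \theta^U_{t-\tau})$ to $I^b_t$, $\delta \sigma(t, \theta^U_t, \theta^U_{t-\tau})$ to $I^\sigma_t$, the corresponding delay/measure driver appearing in the $Y$ equation evaluated along $(U, V, W, \beta)$ to $I^f_t$, and $\delta\{\partial_x g(U_T, \L(U_T)) + \tilde{\E}[\partial_\mu g(\tilde{U}_T, \L(U_T))(U_T)]\}$ to $I^g_T$. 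Solving MV-FABSDE($\lambda_0$) with these augmented data produces $(X, Y, Z, \alpha)$, and setting $\Phi(U, V, W, \beta) := (X, Y, Z, \alpha)$ gives a well-defined map whose fixed points are exactly the solutions of MV-FABSDE($\lambda_0 + \delta$).

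The crucial step is the contraction estimate. For two inputs $(U^i, V^i, W^i, \beta^i)$ with images $(X^i, Y^i, Z^i, \alpha^i)$, I will apply It\^o's formula to $(X^1_t - X^2_t)(Y^1_t - Y^2_t)$, take expectations, and use Fubini together with the vanishing of $Y, Z$ on $(T, T+\tau]$ to convert delayed forward drifts into anticipated backward terms, mirroring exactly the manipulation used to derive the adjoint equation \eqref{adjoint}. The strong $L$-convexity of $f$ and $g$ from (H5.3) produces a coercive lower bound $\kappa \E \int_0^T |\alpha^1_t - \alpha^2_t|^2\, dt$, while (H5.1)--(H5.2) control the remaining cross terms linearly in $\delta$ through the common bound $R$ and the Lipschitz constant $L$. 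Combined with the BDG inequality and Gr\"onwall's lemma, this yields $\|\Phi(U^1,V^1,W^1,\beta^1) - \Phi(U^2,V^2,W^2,\beta^2)\|_{\mathcal{H}}^2 \leq C\delta \,\|(U^1-U^2,V^1-V^2,W^1-W^2,\beta^1-\beta^2)\|_{\mathcal{H}}^2$ with $C = C(R, L, \kappa, T, \tau)$ independent of $\lambda_0$, so that choosing $\delta_0 := 1/(2C) \wedge 1$ makes $\Phi$ a strict contraction on $\mathcal{H}$ uniformly in $\lambda_0$, and Banach's fixed point theorem concludes.

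The main obstacle will be the bookkeeping required by the simultaneous presence of delay and measure derivatives: at each integration by parts one must carefully juggle the independent copies $(\tilde{X}, \tilde{Y}, \tilde{Z})$ on the auxiliary space $(\tilde{\W}, \tilde{\F}, \tilde{\P})$ against the Fubini time shift $t \leftrightarrow t+\tau$ (which swaps $\partial_{x_\tau}, \partial_{\mu_\tau}$ terms at $t+\tau$ into contributions evaluated at $t$ under conditional expectation), and verify that the coercive term from (H5.3) genuinely dominates both the classical and the delay/mean-field cross contributions, with the resulting constant $C$ free of any dependence on $\lambda_0$.
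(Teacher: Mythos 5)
Your proposal follows essentially the same route as the paper: the continuation method with a fixed-point map obtained by augmenting the input processes $(I^b,I^\sigma,I^f,I^g_T)$ with $\delta$-weighted terms evaluated along the input quadruple, a contraction estimate derived from integration by parts on $\Delta X_t\,Y_t$ combined with the Fubini time-shift, the strong convexity of $f$ and $g$ to control $\E\int_0^T|\Delta\alpha_t|^2\,dt$, It\^o plus Gr\"onwall for the $X$ and $(Y,Z)$ bounds, and the choice $\delta_0 = 1/(2C)$. This matches the paper's proof of Lemma \ref{lemma} in all essential respects.
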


\begin{proof}
Assuming that $(\check{X}, \check{Y}, \check{Z}, \check{\alpha})$ are given as an input, for any $\lambda \in [\lambda_0, \lambda_0 + \delta_0]$, where $\delta_0 > 0$ to be determined, denoting $\delta := \lambda - \lambda_0 \leq \delta_0$, we take
\begin{equation}
\begin{aligned}
    I_t^b \leftarrow & \delta [b(t, \check{\theta}_t, \check{\theta}_{t-\tau})] + I_t^b, \\
    I_t^\sigma \leftarrow & \delta [\sigma(t, \check{\theta}_t, \check{\theta}_{t-\tau})] + I_t^\sigma, \\
    I_t^f \leftarrow &  \delta\bigg[\partial_x H(t, \check{\theta}_t, \check{\theta}_{t-\tau}, Y_t, Z_t) 
     + \tilde{\E} [\partial_\mu H(t, \tilde{\check{\theta}}_t, \tilde{\check{\theta}}_{t-\tau}, \tilde{\check{Y}}_t, \tilde{\check{Z}}_t)(X_t)] \\
     & + \E[\partial_{x_\tau} H(t+\tau, \check{\theta}_{t+\tau}, \check{\theta}_{t}, \check{Y}_{t+\tau}, \check{Z}_{t+\tau}) | \F_t]
      + \E [ \tilde{\E} [\partial_{\mu_\tau} H(t+\tau, \tilde{\check{\theta}}_{t+\tau}, \tilde{\check{\theta}}_{t}, \tilde{\check{Y}}_{t+\tau}, \tilde{\check{Z}}_{t+\tau})(\check{X}_{t})] | \F_t] \bigg] \\
      & + I_t^f, \\
    I_T^g \leftarrow & \delta \bigg[ \partial_x g(\check{X}_T, \mu_T) + \tilde{\E}[\partial_\mu g (\tilde{\check{X}}_T, \mu_T) (\check{X}_T) \bigg] + I_T^g.
\end{aligned}
\end{equation}
According to the assumption, let $(X, Y, Z)$ be the solutions of MV-FABSDE($\lambda_0$) corresponding to inputs $(\check{X}, \check{Y}, \check{Z})$, i.e., for $t \in [0, T]$
\begin{equation}
\begin{aligned}
        d X_t =&  (\lambda_0  b_t + \delta \check{b}_t + I_t^b) dt + (\lambda_0  \sigma_t + \delta  \check{\sigma}_t +I_t^{\sigma}) dW_t, \\
         d Y_t = & - \bigg \{ \lambda_0(\partial_x H_t +   \tilde{\E}[\partial_{\mu} \tilde{H}_t(X_t)] 
         +  \E[\partial_{x_\tau} H_{t+\tau} | \F_t] 
         +  \E [\tilde{\E} [\partial_{\mu_\tau} \tilde{H}_{t+\tau}(X_t)] |\F_t] ) \\
      & +  \delta (\partial_x \check{H}_t +   \tilde{\E}[ \partial_{\mu} \tilde{\check{H}}_t(\check{X}_t)] +  \E[\partial_{x_\tau} \check{H}_{t+\tau} | \F_t] + \E [\tilde{\E} [\partial_{\mu_\tau} \tilde{\check{H}}_{t+\tau}(\check{X}_t)] |\F_t] )+ I_t^f \bigg \} dt \\
      & + Z_t dW_t,
\end{aligned}
\end{equation}
with initial condition, $X_0 = x_0$, $X_s = \alpha_s = 0$ for $s \in [-\tau, 0)$, and terminal condition
\begin{equation}\label{terminal}
 Y_T =  \lambda_0 \bigg( \partial_x g_T + \tilde{\E}[\partial_\mu \tilde{g}_T(X_T)] \bigg) 
 + \delta \bigg( \partial_x \check{g}_T + \tilde{\E}[\partial_\mu \tilde{\check{g}}_T (\check{X}_T)] \bigg) + I_T^g, 
\end{equation}
and $ Y_t = Z_t = 0$ for $t \in (T, T+\tau]$, where we have used simplified notations,
\begin{equation}
\begin{aligned}
    b_t  := & b(t, \theta_t, \theta_{t-\tau}); \quad 
    \check{b }_t  :=  b(t, \check{\theta}_t, \check{\theta}_{t-\tau}) ;  \quad
     \sigma_t  :=  \sigma(t, \theta_t, \theta_{t-\tau}); \quad
    \check{\sigma}_t  :=  \sigma(t, \check{\theta}_t, \check{\theta}_{t-\tau});\\
    \partial_x H_t := & \partial_x H(t, \theta_t, \theta_{t-\tau}, Y_t, Z_t) ; \quad
    \tilde{\E} [\partial_{\mu} \tilde{H}_t(X_t)] :=  \tilde{\E} [\partial_\mu H(t, \tilde{\theta}_t, \tilde{\theta}_{t-\tau}, \tilde{Y}_t, \tilde{Z}_t)(X_t)]\\
     \E[\partial_{x_\tau} H_{t+\tau} | \F_t] := & \E[\partial_{x_\tau} H(t+\tau, \theta_{t+\tau}, \theta_{t}, Y_{t+\tau}, Z_{t+\tau}) | \F_t]; \\
    \E[ \tilde{\E} [\partial_{\mu} \tilde{H}_t(X_t)] | \F_t] := & \E [ \tilde{\E} [\partial_{\mu_\tau} H(t+\tau, \tilde{\theta}_{t+\tau}, \tilde{\theta}_{t}, \tilde{Y}_{t+\tau}, \tilde{Z}_{t+\tau})(X_{t})] | \F_t]; \\
    \partial_x g_T := & \partial_x g(X_T, \mu_T); \quad
     \tilde{\E}[\partial_\mu \tilde{g}_T(X_T)] ] :=  \tilde{\E}[\partial_\mu g(\tilde{X}_T, \mu_T) (X_T)] \\
     \mbox{similar notation for } & \partial_x \check{H}_t, 
     \ \tilde{\E}[ \partial_{\mu} \tilde{\check{H}}_t(\check{X}_t)], 
     \ \E[\partial_{x_\tau} \check{H}_{t+\tau} | \F_t], 
     \ \mbox{ and }  \E [\tilde{\E} [\partial_{\mu_\tau} \tilde{\check{H}}_{t+\tau}(\check{X}_t)] |\F_t].
\end{aligned}
\end{equation}
We would like to show that the map $\Phi: (\check{X}, \check{Y}, \check{Z}, \check{\alpha}) \to \Phi(\check{X}, \check{Y}, \check{Z}, \check{\alpha}) = (X, Y, Z, \alpha)$ is a contraction.
Consider $(\Delta X, \Delta Y, \Delta Z, \Delta \alpha) = (X - X', Y - Y', Z - Z', \alpha - \alpha')$, where $(X', Y', Z', \alpha') = \Phi(\check{X}', \check{Y}', \check{Z}', \check{\alpha}')$. 
In addition, for the following computation, we have used simplified notation:
\begin{equation}
\begin{aligned}
    \Delta b_t  := & b(t, \theta_t, \theta_{t-\tau}) - b(t, \theta_t',\theta_{t-\tau}'); \quad 
    \Delta \check{b }_t  :=  b(t, \check{\theta}_t, \check{\theta}_{t-\tau}) - b(t, \check{\theta}_t', \check{\theta}_{t-\tau}'); \\
    \Delta \sigma_t  := & \sigma(t, \theta_t, \theta_{t-\tau}) - \sigma(t, \theta_t',\theta_{t-\tau}'); \quad 
    \Delta \check{\sigma}_t  :=  \sigma(t, \check{\theta}_t, \check{\theta}_{t-\tau}) - \sigma(t, \check{\theta}_t', \check{\theta}_{t-\tau}')\\
     \partial_x g_T := & \partial_x g(X_T, \mu_T) - \partial_x g(X'_T, \mu_T); \\
      \Delta \tilde{\E}[\partial_\mu \tilde{g}_T(X_T)] ] := & \tilde{\E}[\partial_\mu g(\tilde{X}_T, \mu_T) (X_T)] - \tilde{\E}[\partial_\mu g(\tilde{\check{X}}'_T, \mu_T) (X_T')]\\
    \Delta \partial_x H_t := & \partial_x H(t, \theta_t, \theta_{t-\tau}, Y_t, Z_t) - \partial_x H(t, \theta_t', \theta_{t-\tau}', Y_t, Z_t)\\
    \Delta \tilde{\E} [\partial_{\mu} \tilde{H}_t(X_t)] := & \tilde{\E} [\partial_\mu H(t, \tilde{\theta}_t, \tilde{\theta}_{t-\tau}, \tilde{Y}_t, \tilde{Z}_t)(X_t)] - \tilde{\E} [\partial_\mu H(t, \tilde{\theta}'_t, \tilde{\theta}'_{t-\tau}, \tilde{Y}_t, \tilde{Z}_t)(X_t')] \\
    \Delta \E[\partial_{x_\tau} H_{t+\tau} | \F_t] := & \E[\partial_{x_\tau} H(t+\tau, \theta_{t+\tau}, \theta_{t}, Y_{t+\tau}, Z_{t+\tau}) | \F_t] - \E[\partial_{x_\tau} H(t+\tau, \theta'_{t+\tau}, \theta'_{t}, Y_{t+\tau}, Z_{t+\tau}) | \F_t]\\
    \Delta \E[ \tilde{\E} [\partial_{\mu} \tilde{H}_t(X_t)] | \F_t] := & \E [ \tilde{\E} [\partial_{\mu_\tau} H(t+\tau, \tilde{\theta}_{t+\tau}, \tilde{\theta}_{t}, \tilde{Y}_{t+\tau}, \tilde{Z}_{t+\tau})(X_{t})] | \F_t]\\
    & - \E [ \tilde{\E} [\partial_{\mu_\tau} H(t+\tau, \tilde{\theta}'_{t+\tau}, \tilde{\theta}_{t}', \tilde{Y}_{t+\tau}, \tilde{Z}_{t+\tau})(X_{t}')] | \F_t]  \\
     \mbox{similar notation for } &  \Delta \partial_x \check{H}_t, 
     \ \Delta \tilde{\E}[ \partial_{\mu} \tilde{\check{H}}_t(\check{X}_t)], 
     \ \Delta \E[\partial_{x_\tau} \check{H}_{t+\tau} | \F_t], 
     \ \mbox{ and } \Delta \E [\tilde{\E} [\partial_{\mu_\tau} \tilde{\check{H}}_{t+\tau}(\check{X}_t)] |\F_t].
\end{aligned}
\end{equation}

Applying integration by parts to $\Delta X_t Y_t$, we have
\begin{equation}
\begin{aligned}
& d(\Delta X_t Y_t)\\
= & Y_t \bigg\{[\lambda_0 \Delta b_t + \delta \Delta \check{b }_t ] dt + [\lambda_0 \Delta \sigma_t + \delta \Delta \check{\sigma}_t ] dW_t \bigg\}\\
	&- \Delta X_t \bigg \{ \lambda_0(\partial_x H_t + \tilde{\E}[\partial_{\mu} \tilde{H}_t(X_t)] 
         + \E[\partial_{x_\tau} H_{t+\tau} | \F_t] 
         + \E [\tilde{\E} [\partial_{\mu_\tau} \tilde{H}_{t+\tau}(X_t)] |\F_t] ) \\
      & +  \delta (\partial_x \check{H}_t + \tilde{\E}[ \partial_{\mu} \tilde{\check{H}}_t(\check{X}_t)] +  \E[\partial_{x_\tau} \check{H}_{t+\tau} | \F_t] +  \E [\tilde{\E} [\partial_{\mu_\tau} \tilde{\check{H}}_{t+\tau}(\check{X}_t)] |\F_t] ) \bigg \} dt \\
      & + \Delta X_t Z_t dW_t + (\lambda_0 \Delta \sigma_t + \delta \Delta \check{\sigma}_t) Z_t dt.
\end{aligned}
\end{equation}
After integrating from $0$ to $T$, and taking expectation on both sides, we obtain
\begin{equation}\label{deltaXY1}
\begin{aligned}
& \E[\Delta X_T Y_T]\\
= & \lambda_0 \E \int_0^T \bigg(\Delta b_t Y_t + \Delta \sigma_t Z_t - \Delta X_t (\partial_x H_t + \tilde{\E}[\partial_{\mu} \tilde{H}_t(X_t)] 
         + \E[\partial_{x_\tau} H_{t+\tau} | \F_t] \\
         & + \E [\tilde{\E} [\partial_{\mu_\tau} \tilde{H}_{t+\tau}(X_t)] |\F_t] )\bigg) dt \\
       &+  \delta \E \int_0^T \bigg(\Delta \check{b}_t Y_t + \Delta \check{\sigma} Z_t - \Delta X_t (\partial_x \check{H}_t + \tilde{\E}[\partial_{\mu} \tilde{\check{H}}_t(\check{X}_t)] 
         + \E[\partial_{x_\tau} \check{H}_{t+\tau} | \F_t] \\
         & + \E [\tilde{\E} [\partial_{\mu_\tau} \tilde{\check{H}}_{t+\tau}(\check{X}_t)] |\F_t] )\bigg) dt \\  
\end{aligned}
\end{equation}
In the meantime, from the terminal condition of $Y_T$ given in \eqref{terminal}, and since $g$ is convex, we also have 
\begin{equation}\label{deltaXY2}
\begin{aligned}
& \E[\Delta X_T Y_T]\\
= & \E\left[\Delta X_T  \bigg(\lambda_0 ( \partial_x g_T + \tilde{\E}[\partial_\mu \tilde{g}_T(X_T)] ) + \delta ( \partial_x \check{g}_T + \tilde{\E}[\partial_\mu \tilde{\check{g}}_T (\check{X}_T)] ) + I_T^g \bigg)\right]\\
\geq &  \lambda_0 \E[g(X_T, \mu_T ) - g(X_T' - \mu_T^')] 
	+ \delta \Delta X_T ( \partial_x \check{g}_T + \tilde{\E}[\partial_\mu \tilde{\check{g}}_T (\check{X}_T)]) + \Delta X_T I_T^g\\
\end{aligned}
\end{equation}
Following the proof of sufficient part of maximum principle and using \eqref{deltaXY1}, and \eqref{deltaXY2}, we find
\begin{equation}\label{lambdaJ1}
\begin{aligned}
& \lambda_0(J(\alpha) - J(\alpha'))\\
		= & \lambda_0 \E[g(X_T, \mu_T) - g(X_T', \mu_T')] + \lambda_0 \E\int_0^T [f(t, \theta_t, X_{t-\tau}, \mu_{t-\tau}) - f(t, \theta_t', X'_{t-\tau}, \mu_{t-\tau}')] dt\\
		   \leq & \E[\Delta X_T Y_T] - \delta \Delta X_T ( \partial_x \check{g}_T + \tilde{\E}[\partial_\mu \tilde{\check{g}}_T (\check{X}_T)]) - \Delta X_T I_T^g\\
    & + \lambda_0 \E\int_0^T [f(t, \theta_t, X_{t-\tau}, \mu_{t-\tau}) - f(t, \theta_t', X'_{t-\tau}, \mu_{t-\tau}')] dt\\
	= & \lambda_0 \E \int_0^T \bigg[\Delta b_t Y_t + \Delta \sigma_t Z_t - \Delta X_t (\partial_x H_t + \tilde{\E}[\partial_{\mu} \tilde{H}_t(X_t)] 
         + \E[\partial_{x_\tau} H_{t+\tau} | \F_t] \\
         & + \E [\tilde{\E} [\partial_{\mu_\tau} \tilde{H}_{t+\tau}(X_t)] |\F_t] )\bigg] dt \\
       &+  \delta \E \int_0^T \bigg[\Delta \check{b}_t Y_t + \Delta \check{\sigma}_t Z_t - \Delta X_t (\partial_x \check{H}_t + \tilde{\E}[\partial_{\mu} \tilde{\check{H}}_t(\check{X}_t)] 
         + \E[\partial_{x_\tau} \check{H}_{t+\tau} | \F_t] \\
         & + \E [\tilde{\E} [\partial_{\mu_\tau} \tilde{\check{H}}_{t+\tau}(X_t)] |\F_t] )\bigg] dt \\ 
         & + \lambda_0 \E \int_0^T [H(t, \theta_t, \theta_{t-\tau}, Y_t, Z_t) - H(t, \theta'_t, \theta'_{t-\tau}, Y_t, Z_t)] dt \\
         & - \lambda_0 \E \int_0^T (\Delta b_t Y_t + \Delta_t \sigma Z_t )dt - \delta \Delta X_T ( \partial_x \check{g}_T + \tilde{\E}[\partial_\mu \tilde{\check{g}}_T (\check{X}_T)]) - \Delta X_T I_T^g\\
     = & \lambda_0 \E \int_0^T  \bigg[H(t, \theta_t, \theta_{t-\tau}, Y_t, Z_t) - H(t, \theta'_t, \theta'_{t-\tau}, Y_t, Z_t) - \Delta X_t (\partial_x H_t + \tilde{\E}[\partial_{\mu} \tilde{H}_t(X_t)] 
         + \E[\partial_{x_\tau} H_{t+\tau} | \F_t] \\
         & + \E [\tilde{\E} [\partial_{\mu_\tau} \tilde{H}_{t+\tau}(X_t)] |\F_t] )\bigg] dt 
         +  \delta \E \int_0^T \bigg[\Delta \check{b}_t Y_t + \Delta \check{\sigma} Z_t - \Delta X_t (\partial_x \check{H}_t + \tilde{\E}[\partial_{\mu} \tilde{\check{H}}_t(\check{X}_t)] 
         + \E[\partial_{x_\tau} \check{H}_{t+\tau} | \F_t] \\
         & + \E [\tilde{\E} [\partial_{\mu_\tau} \tilde{\check{H}}_{t+\tau}(\check{X}_t)] |\F_t] )\bigg] dt 
         - \delta \Delta X_T ( \partial_x \check{g}_T + \tilde{\E}[\partial_\mu \tilde{\check{g}}_T (\check{X}_T)]) - \Delta X_T I_T^g\\
     \leq & -\E \int_0^T \lambda_0 \kappa | \Delta \alpha_t|^2 dt
     		+  \delta \E \int_0^T \bigg[\Delta \check{b}_t Y_t + \Delta \check{\sigma}_t Z_t - \Delta X_t (\partial_x \check{H}_t + \tilde{\E}[\partial_{\mu} \tilde{\check{H}}_t(\check{X}_t)] 
         + \E[\partial_{x_\tau} \check{H}_{t+\tau} | \F_t] \\
         & + \E [\tilde{\E} [\partial_{\mu_\tau} \tilde{\check{H}}_{t+\tau}(\check{X}_t)] |\F_t] )\bigg] dt 
         - \delta \Delta X_T ( \partial_x \check{g}_T + \tilde{\E}[\partial_\mu \tilde{\check{g}}_T (\check{X}_T)]) - \Delta X_T I_T^g\\
\end{aligned}
\end{equation}
Reverse the role of $\alpha$ and $\alpha'$, we also have 
\begin{equation}\label{lambdaJ2}
\begin{aligned}
& \lambda_0(J(\alpha') - J(\alpha))\\
     \leq & -\E \int_0^T \lambda_0 \kappa | \Delta \alpha'_t|^2 dt
     		+  \delta \E \int_0^T \bigg[\Delta \check{b}'_t Y'_t + \Delta \check{\sigma}'_t Z'_t - \Delta X'_t (\partial_x \check{H}'_t + \tilde{\E}[\partial_{\mu} \tilde{\check{H}}'_t(\check{X}'_t)] 
         + \E[\partial_{x_\tau} \check{H}'_{t+\tau} | \F_t] \\
         & + \E [\tilde{\E} [\partial_{\mu_\tau} \tilde{\check{H}}'_{t+\tau}(\check{X}'_t)] |\F_t] )\bigg] dt 
         - \delta \Delta X'_T ( \partial_x \check{g}_T + \tilde{\E}[\partial_\mu \tilde{\check{g}}_T (\check{X}'_T)]) - \Delta X'_T I_T^g\\
\end{aligned}
\end{equation}

Summing \eqref{lambdaJ1} and \eqref{lambdaJ2}, using the fact that $b$ and $\sigma$ have the linear form, using change of time and Lipschitz assumption, it yields
\begin{equation}\label{alpha}
\begin{aligned}
& 2 \lambda_0 \kappa \E \int_0^T |\Delta \alpha_t|^2 dt \\
    \leq &  \delta \E \int_0^T \bigg[\Delta \check{b}_t \Delta Y_t + \Delta \check{\sigma} \Delta Z_t - \Delta X_t (\Delta \partial_x \check{H}_t + \Delta \tilde{\E}[\partial_{\mu} \tilde{\check{H}}_t(X_t)] 
         + \Delta \E[\partial_{x_\tau} \check{H}_{t+\tau} | \F_t] \\
         & + \Delta \E [\tilde{\E} [\partial_{\mu_\tau} \tilde{\check{H}}_{t+\tau}(X_t)] |\F_t] )\bigg] dt 
         + \delta \Delta X_T ( \partial_{x'} \check{g}'_T  - \partial_x \check{g}_T +\tilde{\E}[\partial_\mu \tilde{\check{g}}'_T (\check{X}_T)] -  \tilde{\E}[\partial_\mu \tilde{\check{g}}_T (\check{X}_T)] )\\
     \leq & \frac{1}{2}  \E \int_0^T \bigg[\e (|\Delta X_t|^2  + |\Delta Y_t|^2 + |\Delta Z_t|^2) + \frac{1}{\e} \delta^2 \bigg( |\Delta \check{b}_t|^2 + |\Delta \check{\sigma}|^2 \\
         & + |\Delta \partial_x \check{H}_t + \Delta \tilde{\E}[\partial_{\mu} \tilde{\check{H}}_t(X_t)] 
         + \Delta \E[\partial_{x_\tau} \check{H}_{t+\tau} | \F_t] 
         + \Delta \E [\tilde{\E} [\partial_{\mu_\tau} \tilde{\check{H}}_{t+\tau}(X_t)] |\F_t] |^2 \bigg) \bigg] dt\\
         &+ \frac{1}{2} \bigg(\e |\Delta X_T|^2 +  \frac{1}{\e} \delta^2 \bigg| \partial_{x'} \check{g}'_T  - \partial_x \check{g}_T +\tilde{\E}[\partial_\mu \tilde{\check{g}}'_T (\check{X}_T)] -  \tilde{\E}[\partial_\mu \tilde{\check{g}}_T (\check{X}_T)]  \bigg|^2 \bigg) \\
     \leq & \frac{1}{2} \e  \E \left[   \int_0^T \e (|\Delta X_t|^2  + |\Delta Y_t|^2 + |\Delta Z_t|^2 + |\Delta \alpha_t|^2) dt + |\Delta X_T|^2 \right] \\
         &+ \frac{1}{2} \delta \frac{C}{\e}  \E \left[ \int_0^T (|\Delta \check{X}_t|^2  + |\Delta \check{Y}_t|^2 + |\Delta \check{Z}_t|^2 + |\Delta \check{\alpha}_t|^2)] dt + |\Delta \check{X}_T|^2 \right], \\
\end{aligned}
\end{equation}

Next, we apply Ito's formula to $\Delta X_t^2$, 
\begin{equation}
\begin{aligned}
& d\Delta X_t^2\\
= & 2 \Delta X_t dX_t + d \langle X, X \rangle_t\\
= & 2 \Delta X_t (\lambda_0 \Delta b_t + \delta \Delta \check{b }_t ) dt + 2 \Delta X_t (\lambda_0 \Delta \sigma_t + \delta \Delta \check{\sigma}_t ) dW_t 
    + \bigg(\lambda_0 \Delta \sigma_t + \delta \Delta \check{\sigma}_t \bigg)^2 dt\\
\end{aligned}
\end{equation}
Then integrate from $0$ to $T$, and take expectation,
\begin{equation}
\begin{aligned}
& \E[|\Delta X_t|^2] \\
= & 2\lambda_0  \E \int_0^t |\Delta X_s \Delta b_s| ds + 2 \delta \E \int_0^t |\Delta X_s \Delta \check{b}_s| ds
     +\E \int_0^t |\lambda_0 \Delta \sigma_s + \delta \Delta \check{\sigma}_s|^2 ds\\
\leq & \lambda_0 \E \int_0^t (|\Delta X_s|^2 + |\Delta b_s|^2) ds + \E \int_0^t (|\Delta X_s|^2 + \delta^2 |\Delta \check{b}_s|^2) ds\\
    &+ \E \int_0^t (2 \lambda_0^2  |\Delta \sigma_s|^2 + 2 \delta^2 |\Delta \check{\sigma}_s|^2) ds\\
\leq & C \E \int_0^{t+\tau} (|\Delta X_s|^2 + |\Delta \alpha_s|^2) ds + \delta C \E \int_0^{t+\tau} (|\Delta \check{X}_s|^2 + |\Delta \check{\alpha}_s|^2) ds
\end{aligned}
\end{equation}
From Gronwall's inequality, we can obtain
\begin{equation}\label{X}
\sup_{0 \leq t \leq T} \E[|X_t|^2] \leq C \E \int_0^T |\Delta \alpha_t|^2 dt + \delta C \E\int_0^T (|\Delta \check{X}_t|^2 + |\Delta \check{\alpha}_t|^2) dt
\end{equation}
Similarly, applying Ito's formula to $|\Delta Y_t|^2$, and taking expectation, we have
\begin{equation}
\begin{aligned}
& \E\left[|\Delta Y_t|^2 + \int_t^T |\Delta Z_s|^2 ds \right] \\
=& 2 \lambda_0 \E \int_t^T \bigg|\Delta Y_t \bigg(\Delta \partial_x H_t +  \Delta \tilde{\E}[\partial_{\mu} \tilde{H}_t(X_t)] 
         + \Delta \E[\partial_{x_\tau} H_{t+\tau} | \F_t] 
         + \Delta \E [\tilde{\E} [\partial_{\mu_\tau} \tilde{H}_{t+\tau}(X_t)] |\F_t] \bigg )   \bigg| \\
      & +  2 \delta  \E \int_t^T \bigg|\Delta Y_t  \bigg( \Delta \partial_x \check{H}_t +  \Delta \tilde{\E}[ \partial_{\mu} \tilde{\check{H}}_t(\check{X}_t)] + \Delta \E[\partial_{x_\tau} \check{H}_{t+\tau} | \F_t] + \Delta \E [\tilde{\E} [\partial_{\mu_\tau} \tilde{\check{H}}_{t+\tau}(\check{X}_t)] |\F_t] \bigg) \bigg|\\
      & + \E |\Delta Y_T|^2\\
\leq & \E \int_t^T \bigg(  \frac{1}{\e} |\Delta Y_t|^2 + \e \lambda_0^2 \bigg|\Delta \partial_x H_t +  \Delta \tilde{\E}[\partial_{\mu} \tilde{H}_t(X_t)] 
         + \Delta \E[\partial_{x_\tau} H_{t+\tau} | \F_t] 
         + \Delta \E [\tilde{\E} [\partial_{\mu_\tau} \tilde{H}_{t+\tau}(X_t)] |\F_t] \bigg|^2 \bigg) dt\\
         &+  \E \int_t^T \bigg( |\Delta Y_t|^2 + \delta^2 \bigg|\Delta \partial_x \check{H}_t +  \Delta \tilde{\E}[ \partial_{\mu} \tilde{\check{H}}_t(\check{X}_t)] + \Delta \E[\partial_{x_\tau} \check{H}_{t+\tau} | \F_t] + \Delta \E [\tilde{\E} [\partial_{\mu_\tau} \tilde{\check{H}}_{t+\tau}(\check{X}_t)] |\F_t] \bigg|^2 \bigg) dt\\
         &+ \E \bigg|\lambda_0 \bigg( \Delta \partial_x g_T + \Delta \tilde{\E}[\partial_\mu \tilde{g}_T(X_T)] \bigg) + \delta \bigg( \Delta \partial_x \check{g}_T + \Delta \tilde{\E}[\partial_\mu \tilde{\check{g}}_T (\check{X}_T)] \bigg) \bigg|^2
\end{aligned}
\end{equation}
Choose $\e = 96 \max\{R^2, L\}$, and from assumption (H5.1 - H5.2) and Gronwall’s inequality, we obtain a bound for $\sup_{0 \leq t \leq T} \E|\Delta Y_t|^2$; and then substitute the it back to the same inequality, we are able to obtain the bound for $\int_0^T E|Z_t|^2 dt$. By combining these two bounds, we deduce that
\begin{equation}\label{YZ}
\begin{aligned}
& \E\left[ \sup_{0 \leq t \leq T} |Y_t|^2 +  \int_0^T  |Z_t|^2  dt \right]\\
\leq & C \E\left(\sup_{0 \leq t \leq T} |\Delta X_t|^2 + \int_0^T |\Delta \alpha_t|^2 dt\right) 
+ \delta C \E \left[  \sup_{0 \leq t \leq T} \left (  |\Delta \check{X}_t|^2  +   |\Delta \check{Y}_t|^2 \right) + \int_0^T \left(  |\Delta \check{Z}_t|^2  + |\Delta \check{\alpha}_t|^2 \right)  dt \right] \\
\end{aligned}
\end{equation}
Finally, combining \eqref{X} and \eqref{YZ}, and \eqref{alpha}, we deduce
\begin{equation}
\begin{aligned}
& \E\left[ \sup_{0 \leq t \leq T}  |\Delta X_t|^2 + \sup_{0 \leq t \leq T}  |\Delta Y_t|^2 +  \int_0^T  \left(|\Delta Z_t^2| + |\Delta \alpha_t|^2 \right) dt \right] \\
\leq & \delta C \E \left[  \sup_{0 \leq t \leq T}  |\Delta \check{X}_t|^2 + \sup_{0 \leq t \leq T}   |\Delta \check{Y}_t|^2 + \int_0^T \left ( |\Delta \check{Z}_t|^2 + |\Delta \check{\alpha}_t|^2 \right) dt \right]
\end{aligned}
\end{equation}
Let $\delta_0 = \frac{1}{2C}$, it is clear that the mapping $\Phi$ is a contraction for all $\delta \in (0, \delta_0)$. It follows that there is a unique fixed point  which is the solution of MV-FABSDE($\lambda$) for $\lambda = \lambda_0 + \delta, $ \  $\delta \in (0, \delta_0)$.

\end{proof}

\begin{proof}[Proof of Theorem \ref{existence}]
For $\lambda = 0$, FABSDE($0$) has a unique solution. Using Lemma \ref{lemma}, there exists a $\delta_0 > 0$ such that FBSDE($\delta$) has a unique solution for $\delta \in [0, \delta_0]$, assuming $(n-1)\delta_0 < 1 \leq n \delta_0$. Following by a induction argument, we repeat Lemma \ref{lemma} for $n$ times, which gives us the existence of the unique solution of FABSDE(1).
\end{proof}

\bibliographystyle{plain}
\bibliography{master}

\end{document}